\newtheorem{theorem}{Theorem}
\newtheorem{proposition}{Proposition}
\newtheorem{corollary}{Corollary}
\newtheorem{lemma}{Lemma}
\newenvironment{proof}{\noindent\textsc{Proof}}{\hfill\ensuremath{\blacksquare}}
\newcommand{\rad}{\textup{rad}}
\newcommand{\Z}{\mathbb{Z}}
\newcommand{\Size}[1]{\left|#1\right|}
\newcommand{\Group}[1]{\left<#1\right>}
\newcommand{\Fix}{\mathrm{Fix}}
\begin{document}

\title{The  $p$-Modular Descent Algebras}
\author{M.D. Atkinson\\S.J. van Willigenburg\\
School of Mathematical and Computational Sciences\\
North Haugh, St Andrews, Fife KY16 9SS, UK\\ \\
G. Pfeiffer\\Department of Mathematics\\University College, Galway, Eire}
\maketitle

\begin{abstract} Solomon's descent algebras are studied over fields
of prime characteristic.  Their radical and irreducible modules are
determined.  It is shown how their representation theory can be
related to the representation theory in fields of characteristic zero.
\end{abstract}

\section{Introduction}\label{Intro}
Descent algebras are non-commutative, non-semi-simple algebras
associated with Coxeter groups.  They were first discovered by Solomon
in the 1970's and for the last 10 years have been studied
intensively.  Previous work has concentrated on the case that the
underlying field has characteristic zero.  However, as we shall soon
see, characteristic $p$ analogues exist and their structure is very
sensitive to the value of $p$.  This paper determines the radical of
a descent algebra in characteristic $p$, and the irreducible
modules.  It also explains how the representation theory is connected
to the representation theory in characteristic zero.

Let $W$ be a Coxeter group with generating set $S$ of fundamental
reflections. Thus every element $w\in W$ can be written as a product
of elements in $S$; we let $\lambda(w)$ denote the length of a
shortest expression for $w$.  If $L$ is any subset of $S$ let
$W_L$ be the subgroup generated by $L$.  $W_{L}$ is called a
{\em standard parabolic} subgroup of $W$ and any subgroup conjugate to
a standard parabolic subgroup is said to be parabolic.
Let $X_L$ be the (unique) set of
minimal length representatives of the left cosets
of $W_L$ in $W$. Notice that $X_L^{-1}=\{g^{-1}|g\in X_L\}$ is
then a set of
representatives (also of minimal length) for the right cosets of
$W_L$ and that $X_K^{-1} \cap X_L$ is a set representatives for the
double cosets corresponding to $W_{L},W_{K}$.

Solomon proved the following remarkable theorem:
\begin{theorem}\cite{solomon-mackey}
\label{so-lomon1}
For every subset $K$ of $S$ let
\[x_K=\sum_{w\in X_K}w.\]
Then
\[x_Jx_K=\sum a_{JKL}x_L\]
where $a_{JKL}$ is the number of elements $g\in X_{J}^{-1}\cap X_K$ such that
$g^{-1}W_Jg\cap W_K=W_L$ with $L=g^{-1}Jg\cap K$.
\end{theorem}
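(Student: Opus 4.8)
The plan is to extract coefficients and then reorganise the double sum over $X_J\times X_K$ according to the $(W_J,W_K)$-double cosets of $W$. Writing $x_Jx_K=\sum_{u\in X_J}\sum_{v\in X_K}uv$, the coefficient of a group element $w$ is $\#\{u\in X_J:u^{-1}w\in X_K\}$, so the whole content is a statement about these counts. Throughout I would lean on the standard theory of minimal-length representatives: every $w\in W$ has a unique factorisation $w=uy$ with $u\in X_K$, $y\in W_K$ and $\lambda(w)=\lambda(u)+\lambda(y)$; the set $X_J^{-1}$ meets each right coset $W_Jw$ in its unique shortest element; and $X_J^{-1}\cap X_K$ is a transversal for the double cosets $W_J\backslash W/W_K$.

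First I would partition $X_J$ by sending $u$ to the double coset $W_JgW_K$ that contains $u^{-1}$. Since each double coset is a disjoint union of right $W_J$-cosets and $X_J^{-1}$ picks one shortest element out of each of these, this yields $X_J=\bigsqcup_{g\in X_J^{-1}\cap X_K}A_g$ with $A_g=\{u\in X_J:u^{-1}\in W_JgW_K\}$, and hence $x_Jx_K=\sum_{g\in X_J^{-1}\cap X_K}\big(\sum_{u\in A_g}u\big)x_K$. The goal is then reduced to proving, for each distinguished representative $g$, the single-double-coset identity $\big(\sum_{u\in A_g}u\big)x_K=x_L$, where $W_L=g^{-1}W_Jg\cap W_K$; once this is done, collecting the double cosets according to the subset $L$ gives $x_Jx_K=\sum_L a_{JKL}x_L$ with $a_{JKL}$ the number of $g\in X_J^{-1}\cap X_K$ producing that $L$.

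To handle a single double coset I would invoke the structural lemma of Kilmoyer: for $g\in X_J^{-1}\cap X_K$ the subgroup $g^{-1}W_Jg\cap W_K$ is the standard parabolic $W_L$ of $W_K$ with $L=g^{-1}Jg\cap K$ (so the two conditions in the definition of $a_{JKL}$ collapse to the single condition $g^{-1}Jg\cap K=L$), the right $W_J$-cosets inside $W_JgW_K$ are exactly the sets $W_Jgq$ with $q$ ranging over a transversal of $W_L\backslash W_K$, and when $q$ is chosen shortest in $W_Lq$ one has $\lambda(gq)=\lambda(g)+\lambda(q)$, so $gq$ is the shortest element of $W_Jgq$. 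Hence $X_J^{-1}\cap W_JgW_K=\{gq:q\in X_L^{-1}\cap W_K\}$, and inverting, $A_g=\{pg^{-1}:p\in X_L\cap W_K\}$. The identity to be proved thus becomes $\sum_{p\in X_L\cap W_K}\sum_{v\in X_K}pg^{-1}v=x_L$, an equality between two sums each of $[W_K:W_L]\,[W:W_K]=[W:W_L]=|X_L|$ terms.

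The heart of the matter — and the step I expect to be the main obstacle — is to show that $(p,v)\mapsto pg^{-1}v$ is a bijection from $(X_L\cap W_K)\times X_K$ onto $X_L$. Since $v=gp^{-1}z$ is forced once $z=pg^{-1}v$ and $p$ are fixed, one must prove that for every $z\in X_L$ there is exactly one $p\in X_L\cap W_K$ with $gp^{-1}z\in X_K$, and that no $z$ outside $X_L$ can occur. This is a genuine length calculation: one follows reduced words through the three factors using the exchange condition, and it is exactly here that the minimality of $g$ in its double coset and the identity $L=g^{-1}Jg\cap K$ are indispensable — they ensure that the cancellation in the product $g^{-1}v$ stays inside $W_K$ and never manufactures a descent lying in $L$. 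Granting this bijection, the remaining work is only the bookkeeping of the partition and the summation over double cosets, which delivers the stated formula.
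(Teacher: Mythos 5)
The paper gives no proof of this theorem at all: it is quoted verbatim from Solomon \cite{solomon-mackey}. So your proposal must stand on its own, and unfortunately its central reduction is false. The preliminary bookkeeping is fine --- the partition $X_J=\bigsqcup_g A_g$ and the identification $A_g=\{pg^{-1}\mid p\in X_L\cap W_K\}$ via the Kilmoyer/Howlett double-coset lemma are both correct --- but the ``single-double-coset identity'' $\bigl(\sum_{u\in A_g}u\bigr)x_K=x_L$ does not hold, and the map $(p,v)\mapsto pg^{-1}v$ is not in general a bijection onto $X_L$. Take $W$ of type $A_2$, so $W=S_3$ with $S=\{s_1,s_2\}$, and let $J=\{s_1\}$, $K=\{s_2\}$, $g=1\in X_J^{-1}\cap X_K$, so $L=J\cap K=\emptyset$. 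Then $X_L\cap W_K=W_K=\{1,s_2\}$ and $X_K=\{1,s_1,s_2s_1\}$, and the six products $pv$ are $1,\,s_1,\,s_2s_1,\,s_2,\,s_2s_1,\,s_1$: the elements $s_1s_2$ and $w_0$ are never reached, while $s_1$ and $s_2s_1$ are each hit twice. Correspondingly $A_1=\{1,s_2\}$ and $(1+s_2)x_K=1+2s_1+s_2+2s_2s_1\neq x_\emptyset$. The theorem itself is of course true here ($x_Jx_K=x_\emptyset+x_K$), but only the sum over both double cosets comes out right, not each summand separately; the product simply does not split double coset by double coset.

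The same example shows that no length argument can repair the step. The coefficient of $w=s_1$ in $x_Jx_K$ is $2$, arising from the pairs $(1,s_1)$ and $(s_2,s_2s_1)$, and both of these have $u\in A_1$, i.e.\ both lie over the double coset of $g=1$; yet on the right-hand side the two contributions to the coefficient of $s_1$ come from the two \emph{different} representatives $g=1$ (since $s_1\in X_\emptyset$) and $g=s_2s_1$ (since $s_1\in X_K$). So the matching between pairs $(u,v)$ and representatives $g$ must depend on the product $w=uv$ and cannot be organised by the double coset containing $u^{-1}$ alone. A correct proof has to fix $w$ first and establish the coefficientwise identity $\#\{u\in X_J\mid u^{-1}w\in X_K\}=\#\{g\in X_J^{-1}\cap X_K\mid w\in X_{L_g}\}$ (this is essentially how Solomon and the later treatments proceed, sometimes via the finer basis indexed by exact descent sets). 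As written, your outline cannot be completed.
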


The set of all $x_K$ is therefore a basis for an algebra $\Sigma _W$ over
the field of
rationals with integer structure constants $a_{JKL}$.  This algebra
is now known as the {\em descent algebra} of $W$ and much is
known about its structure
\cite{Atk,A&vW,B&B-maple,B&B,Ber,BBH&T,Cel,G&R,GKLL&T}.

Solomon himself began the study of this algebra by determining its
radical, $\rad(\Sigma_{W})$, and some properties of
$\Sigma_{W}/\rad(\Sigma_{W})$.  To describe his results let $\chi_{K}$
be the permutation character of $W$ acting on the right cosets of
$W_{K}$ and let $G_{W}$ be the $Z$-module generated by all
$\chi_{K}$.  Note that each generalised character in $G_{W}$ has
integer values on the elements of $W$.

\begin{theorem} \cite{solomon-mackey}
\label{so-lomon2}
\begin{enumerate}
\item $\rad(\Sigma_{W})$ is spanned by all differences $x_{J}-x_{K}$
where $J$ and $K$ are conjugate subsets of $S$
\item the linear map $\theta$ defined by the images
$\theta(x_{K})=\chi_{K}$ is an algebra homomorphism, and
$\ker\theta=\rad(\Sigma_{W})$
\end{enumerate}
\end{theorem}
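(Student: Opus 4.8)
Everything here is controlled by the single map $\theta$, so the plan is: (a) prove $\theta$ is an algebra homomorphism; (b) prove the easy inclusions $N\subseteq\ker\theta$ and $\rad(\Sigma_W)\subseteq\ker\theta$, where $N$ denotes the span of the differences $x_J-x_K$ with $J,K$ conjugate; (c) prove the reverse inclusions $\ker\theta\subseteq N$ and $\ker\theta\subseteq\rad(\Sigma_W)$. Together these squeeze out $\rad(\Sigma_W)=\ker\theta=N$, which is both assertions.

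For (a) I would realise $\chi_K$ as $\mathrm{Ind}_{W_K}^{W}\mathbf 1$, so that $\chi_J\chi_K$ is the character of the permutation module on $W/W_J\times W/W_K$. Decomposing that module into $W$-orbits gives $\chi_J\chi_K=\sum_g\mathrm{Ind}_{W_J\cap gW_Kg^{-1}}^{W}\mathbf 1$, the sum running over minimal-length $(W_J,W_K)$-double-coset representatives $g$. Here I invoke the standard Coxeter-theoretic fact (Kilmoyer's theorem) that for such a $g$ the group $W_J\cap gW_Kg^{-1}$ is parabolic and conjugate to $W_L$ with $L=g^{-1}Jg\cap K$ --- exactly the subset that Theorem~\ref{so-lomon1} attaches to $g$. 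Since conjugate subgroups induce up to isomorphic modules, $\mathrm{Ind}_{W_J\cap gW_Kg^{-1}}^{W}\mathbf 1=\chi_L$, and summing recovers $\chi_J\chi_K=\sum_L a_{JKL}\chi_L$ with the very structure constants of Theorem~\ref{so-lomon1}. Hence $\theta(x_Jx_K)=\sum_L a_{JKL}\chi_L=\theta(x_J)\theta(x_K)$, and $\theta(x_S)=\mathbf 1$ is the identity, so $\theta$ is a unital algebra homomorphism.

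Step (b) is then routine. If $J$ and $K$ are conjugate then $W_J$ and $W_K$ are conjugate subgroups, so $\chi_J=\chi_K$ and $x_J-x_K\in\ker\theta$; while $\mathrm{im}\,\theta\subseteq G_W\otimes\mathbb{Q}$ sits inside the ring of $\mathbb{Q}$-valued class functions on $W$, a commutative semisimple $\mathbb{Q}$-algebra, so the finite-dimensional subalgebra $\mathrm{im}\,\theta\cong\Sigma_W/\ker\theta$ has no nonzero nilpotents, is semisimple, and $\rad(\Sigma_W)\subseteq\ker\theta$. For the reverse inclusion $\ker\theta\subseteq N$, observe that $\chi_K$ depends only on the conjugacy class $C$ of $W_K$, so $\sum_K\gamma_K x_K\in\ker\theta$ means $\sum_C\bigl(\sum_{K\in C}\gamma_K\bigr)\chi_{K_C}=0$; the known linear independence of the distinct permutation characters $\chi_{K_C}$ (classical for $W=S_n$, being the unitriangularity of the Kostka matrix) then forces $\sum_{K\in C}\gamma_K=0$ for each $C$, i.e. $\sum_K\gamma_K x_K\in N$. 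Thus $N=\ker\theta$.

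It remains to see $\ker\theta\subseteq\rad(\Sigma_W)$, i.e. that the ideal $N=\ker\theta$ is nilpotent --- this is the heart of the argument. I would filter $\Sigma_W$ by $F_i=\mathrm{span}\{x_K:|K|\le i\}$; Theorem~\ref{so-lomon1} forces $|L|\le\min(|J|,|K|)$ whenever $a_{JKL}\ne0$, so each $F_i$ is a two-sided ideal. Then for $u=\sum_K\gamma_K x_K$, left multiplication by $u$ preserves each $F_i$ and acts on $F_i/F_{i-1}$ as a diagonal operator in the basis $\{x_L:|L|=i\}$ --- because $M\subseteq L$ with $|M|=|L|$ forces $M=L$ --- with eigenvalue $\sum_K\gamma_K a_{KLL}$ on $x_L$. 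A short count identifies $a_{KLL}=[N_W(W_K):W_K]\cdot\#\{\,D\sim W_K:W_L\subseteq D\,\}$, which depends only on the conjugacy classes of $W_K$ and of $W_L$; so for $u\in N$ each eigenvalue equals $\sum_C a_{K_C L L}\bigl(\sum_{K\in C}\gamma_K\bigr)=0$. Hence $uF_i\subseteq F_{i-1}$ for all $i$, so $N\cdot F_i\subseteq F_{i-1}$ and $N^{|S|+1}\Sigma_W=0$, whence $N^{|S|+1}=0$. Therefore $\ker\theta=N\subseteq\rad(\Sigma_W)$, and with step (b) this gives $\rad(\Sigma_W)=\ker\theta=N$. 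The main obstacle I anticipate is precisely this last step: one must spot the filtration by the $F_i$ and, crucially, that the diagonal constants $a_{KLL}$ are class functions of $W_K$, which is what makes the differences of conjugate $x_K$'s act nilpotently. The Mackey identity of step (a) is the other real ingredient, but it is essentially Theorem~\ref{so-lomon1} read representation-theoretically plus Kilmoyer's theorem on intersections of parabolics; the linear independence of the $\chi_{K_C}$ I would quote rather than reprove.
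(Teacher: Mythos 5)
The paper does not prove this theorem at all: it is quoted from Solomon's 1976 paper \cite{solomon-mackey} and used as a black box, so there is no in-paper argument to compare against. Your reconstruction is, as far as I can check, correct and is essentially Solomon's own route. Step (a) is the Mackey/Kilmoyer decomposition of $W/W_J\times W/W_K$ into orbits, matching the structure constants of Theorem~\ref{so-lomon1}; step (b) is routine; and the two nontrivial ingredients of step (c) are exactly the facts the paper later isolates for its mod-$p$ analogue. Your identification of the diagonal constant $a_{KLL}$ as $[N_W(W_K):W_K]\cdot\#\{W_K^w : W_L\leq W_K^w\}$ is Lemma~\ref{betaJK=aJKK} ($a_{KLL}=\beta_{KL}$, a mark, hence visibly constant on conjugacy classes of $K$), and the linear independence of the distinct $\chi_K$ is the invertibility of the lower-triangular parabolic table of marks (Lemma~\ref{rank}); your filtration $F_i=\mathrm{span}\{x_K:|K|\leq i\}$ with $NF_i\subseteq F_{i-1}$ is the same degree-lowering device the paper uses in the proof of Theorem~\ref{ma-in2} to show $\overline{x}_J$ is nilpotent. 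Two small points to tidy if you write this up: justify that $F_i$ is an ideal on both sides (it follows from $L=g^{-1}Jg\cap K$ forcing $|L|\leq\min(|J|,|K|)$, as you say, but you should state the left- and right-multiplication cases separately), and be explicit that the quoted linear independence is of one $\chi_K$ per conjugacy class of subsets of $S$ (not per subset), which is what makes $\ker\theta\subseteq N$ an equality of dimensions rather than just an inclusion.
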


Since the structure constants $a_{JKL}$ are integers the $Z$-module
${\cal Z}_W$ spanned by all $x_{K}$ is a subring (an order) of
$\Sigma_{W}$.  This allows us to study the $p$-modular version of the
descent algebra for any prime $p$.  For any prime $p$, $p{\cal Z}_W$
 is an ideal of ${\cal Z}_W$.
We define $\Sigma (W,p)={\cal Z}_W/p{\cal Z}_W$, the $p$-modular descent
algebra
of  $W$.
Obviously, $\Sigma (W,p)$ is an algebra over ${\cal F}_{p}$, the field of
order
$p$.

Let  $\rho_1$ be the natural projection ${\cal Z}_W\rightarrow
\Sigma(W,p)$
and let $\overline{x}_J=\rho_{1}(x_{J})$.  Then
$$\overline{x}_J\overline{x}_K=\sum\overline{a}_{JKL}\overline{x}_L$$
where $\overline{a}_{JKL}$ is the image of $a_{JKL}$ in ${\cal F}_{p}$.
Furthermore let $\rho_2$ be the map defined on $G_{W}$ which reduces
character values modulo $p$, and let $G(W,p)$ be the image of
$\rho_{2}$.

The map $\phi:\Sigma (W,p)\rightarrow G(W,p)$ defined by
$$ \phi(\rho_1(x))=\rho_2(\theta(x)) \mbox{ for all }x\in{\cal Z}_W$$
is clearly well-defined and is an algebra homomorphism.  In
section \ref{radical-section} we shall give an analogue of Solomon's
theorem to describe
the radical of $\Sigma(W,p)$ using the homomorphism $\phi$.  In
section \ref{representation-section} we build on
this result by defining the irreducible modules
of $\Sigma(W,p)$.  Then we relate the representation theory of
$\Sigma(W,p)$ to
that of $\Sigma_{W}$ and give explicit details for each of
the Coxeter types.  We begin, in section \ref{marks-section}, by
introducing a tool that we use throughout the paper: the parabolic
table of marks.  Many of our results were first suggested by computation
with GAP \cite{GAP}.

\section{The parabolic table of marks}\label{marks-section}

We
first recall the definition and basic  properties of the Burnside ring
(see \cite[chapter~11]{CR2}  for details) and the table  of marks of a
finite group.    In  the   case   of  a  finite Coxeter   group    the
\emph{parabolic\/} table of marks is defined as a certain submatrix of
the table of marks.

Let $G$ be a finite group and let $G_1  (=1), G_2, \dots, G_r (=G)$ be
representatives  of the conjugacy   classes of subgroups of  $G$. Then
each $G$-set decomposes as a disjoint union of transitive $G$-sets and
each transitive $G$-set is isomorphic to $G/G_i$ for some $i$.  Denote
by $[X]$ the isomorphism type  of the $G$-set $X$.  The \emph{Burnside
  ring\/} $\Omega(G)$   is    the  ring  of  formal    integer  linear
combinations of isomorphism types of $G$-sets with addition defined by
disjoint unions of $G$-sets
\[
  [X] + [Y] = [X \dot{\cup} Y]
\]
and multiplication defined by the cartesian product
\[
   [X] \cdot [Y] = [X \times Y].
\]
The transitive $G$-sets $G/G_i$ form a basis of $\Omega(G)$ so
\[
  \Omega(G) = \Bigl\{\sum_{i=1}^r a_i [G/G_i] \Bigm| a_i \in \Z \Bigr\}
\]
the  free  abelian  group   generated  by  the  isomorphism  types  of
transitive $G$-sets.

The \emph{table of marks\/} of $G$ is the $r \times r$-matrix
\[
  M(G) = \left(\Size{\Fix_{G/G_i}(G_j)}\right)_{i,j=1,\dots,r}
\]
which records for  subgroups $G_i$, $G_j$ of  $G$ the number of  fixed
points of   $G_j$ in the  action  of $G$ on  the  cosets of $G_i$ (the
\emph{mark\/} of $G_j$ on $G/G_i$), and where both $G_i$ and $G_j$ run
through  the (system  of)   representatives  of conjugacy  classes  of
subgroups of $G$.

We have
\[
  \Size{\Fix_{G/G_i}(G_j)} = \Size{N_G(G_i) : G_i} \cdot
  \Size{\{G_i^x \mid x \in G,\, G_j \leq G_i^x\}}
\]
Thus, with  a suitable ordering  of the representatives $G_i$,  we see
that $M(G)$ is a lower triangular matrix  with non-zero entries on the
diagonal, and therefore invertible.

Each finite $G$-set $X$ has an associated vector of fixed point numbers
\[
  \beta_X = ( \Size{\Fix_{X}(G_j)} )_{j=1}^r
\]
and if $[X] = \sum a_i [G/G_i]$ in $\Omega(G)$ then $\beta_X = (\dotsc
a_i  \dotsc) M(G)$.  Disjoint union and  cartesian product of $G$-sets
translate  into  componentwise  addition  and multiplication  of fixed
point vectors.  We thus have
\begin{theorem}[Burnside 1911]
  The map $\beta \colon \Omega(G) \to \Z^r, \; [X] \mapsto \beta_X$ is
  a well defined injective homomorphism of  rings.  In particular, $X$
  and $Y$  are  isomorphic as  $G$-sets if   and only  if  $\beta_X  =
  \beta_Y$.
\end{theorem}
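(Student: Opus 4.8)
The plan is to assemble the statement from the ingredients already set up, in three stages: well-definedness of $\beta$, the ring-homomorphism property, and injectivity. For well-definedness I would first note that for a fixed subgroup $G_j$ the number $\Size{\Fix_X(G_j)}$ depends only on the isomorphism type of the $G$-set $X$, since any $G$-isomorphism $X\to X'$ restricts to a bijection $\Fix_X(G_j)\to\Fix_{X'}(G_j)$; hence $\beta_X$ really is a function of $[X]$. Since the transitive $G$-sets $G/G_1,\dots,G/G_r$ form a $\Z$-basis of $\Omega(G)$, there is a unique $\Z$-linear map $\Omega(G)\to\Z^r$ carrying $[G/G_i]\mapsto\beta_{G/G_i}$, and I would check that this linear map agrees with the set-theoretic rule $[X]\mapsto\beta_X$ on every genuine $G$-set: decomposing $X$ into its orbits, this comes down to the additivity $\beta_{X\,\dot{\cup}\,Y}=\beta_X+\beta_Y$, which holds because $\Fix_{X\,\dot{\cup}\,Y}(G_j)=\Fix_X(G_j)\,\dot{\cup}\,\Fix_Y(G_j)$.

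For the ring-homomorphism claim, additivity is the identity just noted, extended $\Z$-linearly to all of $\Omega(G)$. Multiplicativity follows from $\Fix_{X\times Y}(G_j)=\Fix_X(G_j)\times\Fix_Y(G_j)$, so that marks multiply componentwise; and since (in the ordering with $G_r=G$) the class $[G/G_r]=[G/G]$ is the one-point $G$-set with $\beta_{G/G}=(1,\dots,1)$, the identity of $\Omega(G)$ is sent to the identity of $\Z^r$. This stage is essentially just the remark, already in the text, that disjoint union and cartesian product of $G$-sets translate into componentwise addition and multiplication of fixed-point vectors.

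The one genuinely substantive point is injectivity, and here I would invoke the triangularity of $M(G)$. Suppose $\xi=\sum_i a_i[G/G_i]$ lies in $\ker\beta$. By the formula $\beta_\xi=(\dotsc a_i\dotsc)\,M(G)$ this reads $(\dotsc a_i\dotsc)\,M(G)=0$; but $M(G)$ was shown to be lower triangular with non-zero diagonal, hence $\det M(G)\neq 0$ and $M(G)$ is invertible over $\mathbb{Q}$, forcing all $a_i=0$ and so $\xi=0$. (Only invertibility over $\mathbb{Q}$ is needed, so it is irrelevant that $M(G)^{-1}$ may fail to be an integer matrix.) For the concluding equivalence: if $X,Y$ are $G$-sets with $\beta_X=\beta_Y$ then $[X]=[Y]$ in $\Omega(G)$ by injectivity, and comparing coefficients in the basis $\{[G/G_i]\}$ — using uniqueness of the decomposition of a $G$-set into transitive orbits — shows $X$ and $Y$ have the same transitive constituents with the same multiplicities, hence are $G$-isomorphic; the converse is immediate. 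The main (and only mild) obstacle is the care needed at the well-definedness step to see that the linear-algebraic and set-theoretic descriptions of $\beta$ coincide; the rest is bookkeeping resting on the triangularity of $M(G)$.
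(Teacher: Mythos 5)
Your proposal is correct and follows essentially the same route as the paper, which states the theorem with only an implicit proof: it derives it directly from the two observations that $\beta_X = (\dotsc a_i \dotsc)M(G)$ with $M(G)$ lower triangular and invertible, and that disjoint union and cartesian product translate into componentwise addition and multiplication of fixed-point vectors. You have merely spelled out the well-definedness and unit checks that the paper leaves tacit.
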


Now  let $G= (W,  S)$ be a  finite Coxeter group and let $E$ be a set
of representatives of conjugate subsets of $S$.  The intersection of
any two    parabolic subgroups  of   $W$  is a  parabolic   subgroup.
Therefore, if  $J,K \subseteq  S$, the  direct product   $W/W_J \times
W/W_K$ decomposes as a sum  of transitive $W$-sets,  each of which  is
isomorphic to $W/W_L$ for some $L \subseteq S$.  Thus the coset spaces
$W/W_J$, where  $J$ runs through the set $E$, form the basis of a subring
\[
  \Omega^c(W) = \left<[W/W_J] \mid J \subseteq S\right>
  = \Bigl\{ \sum_{J \in E} \alpha_J [W/W_J] \Bigm| \alpha_J \in \Z\Bigr\}
\]
of $\Omega(W)$, the \emph{parabolic Burnside ring\/} of $(W, S)$,
first introduced in \cite{BBH&T}.

We call the   corresponding part of  the table  of  marks of $W$   the
\emph{parabolic table of marks\/} of $W$, and denote it by
\[
  M^c(W) = \left(\Size{\Fix_{W/W_J}(W_K)}\right)_{J,K\in E}
\]
where we also  write  $\beta_{JK} = \Size{\Fix_{W/W_J}(W_K)}$  for any
$J,K \subseteq S$.  Note that by the formula above  for
$\Size{\Fix_{G/G_i}(G_j)}$ we have the following result \cite{BBH&T}.
\begin{lemma}\label{betaJK=aJKK}
\begin{multline*}
  \beta_{JK} = \Size{N_W(W_J) : W_J} \cdot
  \Size{\{W_J^w \mid w \in W,\, W_K \leq W_J^w\}} \\
  = \Size{\{w \in X_J^{-1}\cap X_K \mid J^w \cap K = K\}} = a_{JKK}.
\end{multline*}
In  particular,  $\beta_{JJ}  =  \Size{N_W(W_J)   :  W_J} \not=0$  and
$\beta_{JJ}$ divides $\beta_{JK}$ for every $K \subseteq S$.
\end{lemma}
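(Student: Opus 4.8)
The plan is to handle the three equalities in turn. The first is immediate from the general formula for $\Size{\Fix_{G/G_i}(G_j)}$ recorded just above: specialising to $G = W$, $G_i = W_J$, $G_j = W_K$ gives at once $\beta_{JK} = \Size{\Fix_{W/W_J}(W_K)} = \Size{N_W(W_J):W_J}\cdot\Size{\{W_J^w \mid w\in W,\ W_K\le W_J^w\}}$.

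For the middle expression I would read $\beta_{JK}$ orbit-theoretically. A coset $xW_J$ is fixed by $W_K$ exactly when $W_K\,xW_J = xW_J$, i.e. when the $W_K$--$W_J$ double coset of $x$ consists of the single left coset $xW_J$, i.e. $x^{-1}W_Kx\le W_J$, equivalently $W_K\le W_J^{x^{-1}}$. Thus $\beta_{JK}$ is the number of $W_K$--$W_J$ double cosets that reduce to a single left coset. Taking the minimal-length double coset representatives $X_J^{-1}\cap X_K$ (after the identification $w\leftrightarrow w^{-1}$ forced by the left/right conventions of section~\ref{Intro}), such a representative $w$ contributes precisely when $W_K\le W_J^w$; and by the standard intersection lemma for minimal double coset representatives --- already built into the statement of Theorem~\ref{so-lomon1} --- one has $W_J^w\cap W_K = W_{J^w\cap K}$, so that $W_K\le W_J^w$ is equivalent to $J^w\cap K = K$. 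This gives the middle equality, and comparing with Theorem~\ref{so-lomon1} the resulting count is exactly the coefficient $a_{JKK}$ of $x_K$ in $x_Jx_K$, which is the last equality.

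For the final assertions put $K = J$. Conjugate subgroups of the finite group $W$ have equal order, so $W_J\le W_J^w$ forces $W_J^w = W_J$; hence $\{W_J^w\mid W_J\le W_J^w\} = \{W_J\}$ and the first equality collapses to $\beta_{JJ} = \Size{N_W(W_J):W_J}$, a positive integer, in particular non-zero. The divisibility $\beta_{JJ}\mid\beta_{JK}$ is then read off from the first equality, which displays $\beta_{JK}$ as $\beta_{JJ}$ times the non-negative integer $\Size{\{W_J^w\mid W_K\le W_J^w\}}$.

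The one place needing care is the bookkeeping of left/right coset conventions and the inversion bijection between $W_J\backslash W/W_K$ and $W_K\backslash W/W_J$, together with the correct form of the Kilmoyer--Howlett intersection lemma; everything else is an unwinding of definitions. Alternatively one can avoid that lemma altogether by reading off $a_{JKK}$ directly from Theorem~\ref{so-lomon1} and checking that its defining conditions for $L = K$ say exactly that $W_K\le W_J^w$.
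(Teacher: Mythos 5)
Your argument is correct. Note, though, that the paper does not actually prove this lemma: it merely observes that the first equality is the displayed formula for $\Size{\Fix_{G/G_i}(G_j)}$ specialised to $G=W$, $G_i=W_J$, $G_j=W_K$, and cites \cite{BBH&T} for the rest. Your write-up supplies exactly the missing details, and does so along the standard lines: a left coset $xW_J$ is fixed by $W_K$ iff the double coset $W_KxW_J$ collapses to $xW_J$, iff $W_K\le W_J^{x^{-1}}$ (a condition depending only on the double coset), so the fixed points are counted by the distinguished representatives $w\in X_J^{-1}\cap X_K$ with $W_K\le W_J^w$; the Kilmoyer--Solomon intersection property $W_J^w\cap W_K=W_{J^w\cap K}$ for such $w$ (which is indeed the content packaged into the statement of Theorem~\ref{so-lomon1}) converts $W_K\le W_J^w$ into $J^w\cap K=K$, and comparison with the definition of $a_{JKL}$ at $L=K$ gives the last equality. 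Your closing remark is also the right one to make explicit: the implication $J^w\cap K=K\Rightarrow W_K\le W_J^w$ is trivial, but the converse is the only step that genuinely uses minimality of $w$, so it deserves the one sentence you give it. The final assertions ($\beta_{JJ}=\Size{N_W(W_J):W_J}$ because a conjugate of $W_J$ containing $W_J$ must equal it, and the divisibility read off from the product formula) are exactly as the paper intends.
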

We have that the map $\beta^c  \colon \Omega^c(W) \to \Z^E, \; [W/W_J]
\mapsto (\beta_{JK})_{K \in E}$  is a well defined injective
ring homomorphism.

The parabolic marks of an arbitrary subgroup of  $W$ coincide with the
marks of a particular parabolic subgroup associated to it.  Let $U \leq
W$ and define the \emph{parabolic closure\/} $U^c$ of $U$ in $W$ as
\[
  U^c = \bigcap \left\{W_J^w \mid
                       J \subseteq S, w \in W, U \leq W_J^w\right\},
\]
the intersection of  all parabolic subgroups of $W$  that contain $U$.
Then $U^c$ is a parabolic subgroup and conjugate to  $W_K$ for some $K
\in E$.

\begin{proposition}
  Let  $U \leq W$   and $K \subseteq S$.  Then   $U^c$ is conjugate to
  $W_K$  if and only  if $\Size{\Fix_{W/W_J}(U)} = \beta_{JK}$ for all
  $J \subseteq S$.
\end{proposition}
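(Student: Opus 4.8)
The plan is to make the fixed-point sets explicit and to exploit the fact that they depend on $U$ only through its parabolic closure $U^c$. For any $V\le W$ and $J\subseteq S$ one has $\Fix_{W/W_J}(V)=\{\,gW_J : V\le gW_Jg^{-1}\,\}$, and as $g$ runs over $W$ the subgroups $gW_Jg^{-1}$ run over all parabolic subgroups conjugate to $W_J$, each arising from exactly $|N_W(W_J):W_J|$ cosets; this is precisely the computation behind the identity $\beta_{JK}=|\Fix_{W/W_J}(W_K)|$ (compare Lemma~\ref{betaJK=aJKK}). The first step I would carry out is to prove that
$$\Fix_{W/W_J}(U)=\Fix_{W/W_J}(U^c)\qquad\text{for every }J\subseteq S .$$
The inclusion $\supseteq$ is immediate from $U\le U^c$; for $\subseteq$, if $gW_J$ is fixed by $U$ then $gW_Jg^{-1}$ is a parabolic subgroup containing $U$, hence one of the subgroups whose intersection defines $U^c$, so $U^c\le gW_Jg^{-1}$ and $gW_J$ is fixed by $U^c$ as well. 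This identity is the crux of the whole argument.

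For the ``only if'' direction I would then write $U^c=vW_Kv^{-1}$ for some $v\in W$ and observe that $gW_J\in\Fix_{W/W_J}(U^c)$ if and only if $(v^{-1}g)W_J\in\Fix_{W/W_J}(W_K)$, so left translation by $v$ gives a bijection $\Fix_{W/W_J}(W_K)\to\Fix_{W/W_J}(U^c)$. Combined with the identity above this yields
$$|\Fix_{W/W_J}(U)|=|\Fix_{W/W_J}(U^c)|=|\Fix_{W/W_J}(W_K)|=\beta_{JK}$$
for all $J\subseteq S$, as wanted.

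For the converse, assuming $|\Fix_{W/W_J}(U)|=\beta_{JK}$ for all $J\subseteq S$, I would use that $U^c$ is parabolic (as noted just before the proposition), say conjugate to $W_{K'}$ with $K'\in E$; applying the direction just proved to $U$ and $K'$ gives $|\Fix_{W/W_J}(U)|=\beta_{JK'}$, hence $\beta_{JK}=\beta_{JK'}$ for every $J\subseteq S$, in particular for every $J\in E$. Picking $K''\in E$ with $W_{K''}$ conjugate to $W_K$, and noting that conjugate subgroups have equal fixed-point counts on each $W/W_J$ (so $\beta_{JK}=\beta_{JK''}$), it follows that columns $K''$ and $K'$ of the parabolic table of marks $M^c(W)=(\beta_{JK})_{J,K\in E}$ coincide. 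Since $M^c(W)$ is invertible — this follows from injectivity of $\beta^c\colon\Omega^c(W)\to\Z^E$ (a map of free $\Z$-modules of equal rank $|E|$), or concretely from the fact that for a suitable ordering of $E$ the matrix $M^c(W)$ is lower triangular with the nonzero diagonal entries $\beta_{JJ}=|N_W(W_J):W_J|$ of Lemma~\ref{betaJK=aJKK} — its columns are linearly independent, so $K''=K'$; thus $U^c$ is conjugate to $W_{K'}=W_{K''}$, which is conjugate to $W_K$.

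The step I expect to be the real obstacle is the first one: recognising and proving that the vector of fixed-point numbers is an invariant of the parabolic closure $U^c$ rather than of $U$ itself. Once that identity is available, both implications reduce to short coset-and-conjugation calculations, and all that remains is the harmless passage from $K$ to a representative in $E$ and the appeal to invertibility of $M^c(W)$.
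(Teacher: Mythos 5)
Your proof is correct and follows essentially the same route as the paper: the key step in both is the identity $\Fix_{W/W_J}(U)=\Fix_{W/W_J}(U^c)$, proved by noting that any parabolic $gW_Jg^{-1}$ containing $U$ must contain $U^c$, and the converse in both cases reduces to the fact that the columns of $M^c(W)$ are pairwise distinct (which you justify, a little more explicitly than the paper, via lower triangularity and the nonzero diagonal from Lemma~\ref{betaJK=aJKK}). No gaps.
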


\begin{proof}
  We have $\Fix_{W/W_J}(U) \supseteq \Fix_{W/W_J}(U^c)$ since $U  \subseteq
U^c$.  Now
  let $x \in \Fix_{W/W_J}(U)$.  We  may assume  that  $x = W_J$.   But
  then, $U  \leq W_J$, and by the  definition of $U^c$, also $U^c \leq
  W_J$, whence $x \in \Fix_{W/W_J}(U^c) = \Fix_{W/W_J}(U)$.

  The  converse follows from the fact  that $U^c$ is conjugate to some
  $W_K$, and that all the columns of the  parabolic table of marks are
  different.
\end{proof}

As a corollary we obtain complete information about  the values of the
permutation characters  afforded  by the $W$-sets $W/W_J$.   Note that
$\left<c_K\right>^c =   W_K$ for a Coxeter  element   $c_K$ of  $W_K$.
Furthermore note that a transitive permutation character value coincides
with the
mark of a cyclic subgroup, both being the same number of fixed points.

\begin{corollary}
  For $J  \subseteq S$ let  $\chi_{J}$ be, as in section \ref{Intro},
   the permutation character of
  $W$ on $W/W_J$.   Then $\chi_{J}(c_K) = \beta_{JK}$.   Moreover, for
  $w \in W$ we have  $\chi_{J}(w) = \beta_{JK}$ if  $\left<w\right>^c$
  is conjugate to $W_K$.
\end{corollary}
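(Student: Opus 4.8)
The plan is to derive the corollary directly from the Proposition just proved, using only the standard fact that a permutation character counts fixed points. First I would observe that for any $w \in W$ the value $\chi_J(w)$ is the number of cosets in $W/W_J$ fixed by $w$, and that a coset fixed by $w$ is automatically fixed by every power of $w$, hence by the whole cyclic subgroup $\langle w\rangle$; consequently $\chi_J(w) = \Size{\Fix_{W/W_J}(\langle w\rangle)}$. This is exactly the identification, noted in the remark above, of a transitive permutation character value with the mark of a cyclic subgroup.

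Next I would invoke the Proposition with $U = \langle w\rangle$. If $\langle w\rangle^c$ is conjugate to $W_K$, the Proposition gives $\Size{\Fix_{W/W_J}(\langle w\rangle)} = \beta_{JK}$ for every $J \subseteq S$, and combining this with the first observation yields $\chi_J(w) = \beta_{JK}$. This establishes the ``moreover'' clause. The identity $\chi_J(c_K) = \beta_{JK}$ is then the special case $w = c_K$: by the remark preceding the statement $\langle c_K\rangle^c = W_K$, which is equal (hence conjugate) to $W_K$, so the general clause applies with this $K$. (Alternatively, for this case one may argue even more directly, using the equality $\Fix_{W/W_J}(U) = \Fix_{W/W_J}(U^c)$ from the proof of the Proposition together with the fact that conjugate subgroups have equally many fixed points.)

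There is essentially no obstacle here: the corollary is a formal consequence of the Proposition once one unwinds the definition of a permutation character. The single point that genuinely uses the theory of Coxeter groups is the assertion $\langle c_K\rangle^c = W_K$, i.e. that a Coxeter element of a finite Coxeter group lies in no proper parabolic subgroup. I would treat this as given, since it is stated in the remark; if a justification were wanted, I would note that a Coxeter element of $W_K$ has no nonzero fixed vector in the reflection representation of $W_K$, whereas every proper parabolic subgroup of $W_K$ does fix a nonzero vector, so the intersection of all parabolic subgroups containing $\langle c_K\rangle$ cannot be proper and therefore equals $W_K$.
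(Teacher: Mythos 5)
Your proof is correct and follows essentially the same route as the paper, which derives the corollary from the Proposition via the two remarks preceding it: that $\left<c_K\right>^c = W_K$ and that a transitive permutation character value equals the mark of the cyclic subgroup generated by the element. Your added justification that a Coxeter element lies in no proper parabolic subgroup (via the reflection representation) is a correct supplement to a fact the paper simply asserts.
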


\section{The radical of $\Sigma(W,p)$}\label{radical-section}

The main aim of this section is to prove the following $p$-modular
analogue of Theorem \ref{so-lomon2}

 \begin{theorem}
 \label{ma-in2}
 $\rad(\Sigma(W,p))=\ker\phi$. Moreover, $\rad(\Sigma(W,p))$ is spanned by all
 $\overline{x}_J-\overline{x}_K$ where $J,K$ are conjugate subsets
 of $S$, together with all $\overline{x}_J$ for which $p$ divides
 $[N_{W}(W_{J}):W_{J}]$.
 \end{theorem}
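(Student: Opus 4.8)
The plan is to leverage the algebra homomorphism $\phi$ and identify its kernel with the radical by a two-way inclusion, mirroring Solomon's original argument but tracking what happens modulo $p$. First I would analyze the target algebra $G(W,p)$. Since $G_W$ is spanned by the permutation characters $\chi_K$ and these are genuine characters with integer values, $G_W$ is a commutative ring (pointwise multiplication of characters), and hence $G(W,p)=\rho_2(G_W)$ is a commutative $\mathcal{F}_p$-algebra. By the Corollary at the end of Section~\ref{marks-section}, a character $\chi_J$ is completely determined by its values $\beta_{JK}$ on Coxeter elements $c_K$, $K\in E$, so $G(W,p)$ embeds in a product of copies of $\mathcal{F}_p$ via $\chi\mapsto(\rho_2(\chi(c_K)))_{K\in E}$ --- that is, $G(W,p)$ is a commutative algebra with a faithful representation by diagonal matrices, hence it is \emph{semisimple}. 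Therefore $\rad(\Sigma(W,p))\subseteq\ker\phi$.

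For the reverse inclusion I must show $\ker\phi$ is a nilpotent ideal; since it is already an ideal, it suffices to exhibit a spanning set of nilpotent (or at least topologically nilpotent, but finite dimension makes this nilpotent) elements, or better, to identify $\ker\phi$ explicitly and check it is nil. Here the key computation is to pin down $\ker\phi$ combinatorially. Working over $\mathbb{Q}$, Solomon's Theorem~\ref{so-lomon2} tells us $\ker\theta=\rad(\Sigma_W)$ is spanned by the differences $x_J-x_K$ for $J,K$ conjugate. Reducing mod $p$: $\phi(\overline{x}_J)=\rho_2(\chi_J)$, and $\rho_2(\chi_J)=\rho_2(\chi_K)$ whenever $J,K$ are conjugate (then $\chi_J=\chi_K$ already), so all $\overline{x}_J-\overline{x}_K$ with $J\sim K$ lie in $\ker\phi$. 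Additionally, using Lemma~\ref{betaJK=aJKK}, the value $\chi_J(c_K)=\beta_{JK}$ is divisible by $\beta_{JJ}=[N_W(W_J):W_J]$; so if $p\mid[N_W(W_J):W_J]$ then $\rho_2(\chi_J)=0$, i.e. $\overline{x}_J\in\ker\phi$. The heart of the matter is the \emph{converse}: that these elements span all of $\ker\phi$. For this I would count dimensions. Let $E_0\subseteq E$ be the set of (classes of) $J$ with $p\nmid[N_W(W_J):W_J]$. Then $\dim_{\mathcal{F}_p}\Sigma(W,p)=|E|+\#\{\text{non-trivial conjugacies}\}$... more precisely $=\#\{J\subseteq S\}$, while the proposed spanning set of $\ker\phi$ has size $\#\{J\subseteq S\}-|E_0|$, so I must show $\dim\phi(\Sigma(W,p))=\dim G(W,p)=|E_0|$. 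That is, the characters $\rho_2(\chi_J)$, $J\in E$, span a space of dimension exactly $|E_0|$ over $\mathcal{F}_p$.

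To establish $\dim G(W,p)=|E_0|$, consider the $|E|\times|E|$ matrix $(\beta_{JK})_{J,K\in E}$, which is (a relabelling of) the parabolic table of marks $M^c(W)$; by the triangularity discussion after the definition of $M(G)$ it has diagonal entries $\beta_{JJ}=[N_W(W_J):W_J]$ and is lower triangular for a suitable ordering refining the subset-inclusion order. Reducing mod $p$, the rows indexed by $J\in E\setminus E_0$ become... not necessarily zero, but the $\mathcal{F}_p$-rank of $\overline{M^c(W)}$ equals the number of nonzero diagonal entries, namely $|E_0|$, provided one checks that the rows with $\beta_{JJ}\equiv0$ are dependent on the others --- and triangularity gives exactly this: a lower-triangular matrix has rank equal to the number of invertible pivots once we argue the zero-pivot rows contribute nothing new, which for a table of marks follows because each such row, by Lemma~\ref{betaJK=aJKK} again, has \emph{all} its entries divisible by $\beta_{JJ}$, hence is identically zero mod $p$. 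So in fact rows $J\in E\setminus E_0$ vanish entirely mod $p$ and the remaining $|E_0|\times|E_0|$ block is lower-triangular with unit diagonal, hence invertible: $\dim G(W,p)=|E_0|$ exactly. Combining, $\dim\ker\phi=\#\{J\subseteq S\}-|E_0|$, matching the size of our explicit spanning set, which is therefore a basis; and since each basis element is nilpotent (the $\overline{x}_J-\overline{x}_K$ from Solomon's result are already nilpotent in $\Sigma_W\otimes\mathcal{F}_p$ hence in the order, and the $\overline{x}_J$ with $p\mid\beta_{JJ}$ are nilpotent because $\overline{x}_J^2=\overline{\beta_{JJ}}\,\overline{x}_J+\cdots$ lies in the span of lower $\overline{x}_L$'s with the leading coefficient killed --- one checks the ideal they generate is nil by the triangular structure of the $\overline{a}_{JKL}$), $\ker\phi$ is a nil ideal, so $\ker\phi\subseteq\rad(\Sigma(W,p))$.

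The main obstacle I anticipate is the last nilpotency verification: proving that the ideal spanned by the proposed elements is actually nilpotent rather than merely lying in the kernel of a semisimple quotient. The cleanest route is probably to avoid it entirely by the dimension count above --- once $\ker\phi$ is shown to have the right dimension and $\Sigma(W,p)/\ker\phi\cong G(W,p)$ is semisimple, we get $\rad(\Sigma(W,p))\subseteq\ker\phi$ for free, and equality will follow if we separately know $\dim\rad(\Sigma(W,p))\geq\dim\ker\phi$, which we can get by exhibiting enough independent nilpotents; so the two halves of the argument reinforce each other and the delicate structural analysis of products $\overline{x}_J\overline{x}_K$ may be packaged into the single statement that the $\overline{a}_{JKL}$ are "triangular" with respect to a suitable partial order on $E$, which is the same fact underlying Solomon's original proof.
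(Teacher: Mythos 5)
Your overall architecture coincides with the paper's: one inclusion from the semisimplicity of the function algebra $G(W,p)$, a dimension count $\dim G(W,p)=s$ obtained from the lower-triangular parabolic table of marks whose rows with $p\mid\beta_{JJ}$ vanish entirely mod $p$ (this is exactly Lemma~\ref{rank}), and the exhibition of $2^{n-1}-s$ explicit elements for the reverse inclusion. Your computation that $\overline{x}_J$ is nilpotent when $p\mid[N_W(W_J):W_J]$ (via $\overline{a}_{JKL}=0$ unless $L\subsetneq K$) is also the paper's.

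The gap is in the last step. From the nilpotency of each spanning element you conclude that $\ker\phi$ is a nil ideal, and your fallback is to ``exhibit enough independent nilpotents'' to force $\dim\rad(\Sigma(W,p))\ge\dim\ker\phi$. Neither inference is valid in a general noncommutative algebra: a sum of nilpotent elements need not be nilpotent, and a nilpotent element need not lie in the radical (a nonzero nilpotent matrix in $M_n({\cal F}_p)$ is nilpotent while the radical of $M_n({\cal F}_p)$ is zero). You correctly identify this as the main obstacle but do not supply the missing ingredient. The paper supplies it as Lemma~\ref{co-mutt}: $\rho_1({\cal R}_W)$ is a nilpotent ideal of $\Sigma(W,p)$, being the image of the nilpotent ideal ${\cal R}_W$, hence is contained in $\rad(\Sigma(W,p))$; therefore $\Sigma(W,p)/\rad(\Sigma(W,p))$ is a homomorphic image of ${\cal Z}_W/{\cal R}_W\cong G_W$ and is commutative, so every nilpotent element of $\Sigma(W,p)$ maps to a nilpotent element of a commutative semisimple algebra, i.e.\ to zero, and lies in the radical. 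With that lemma your $2^{n-1}-s$ independent nilpotents all lie in $\rad(\Sigma(W,p))$ and your dimension count closes the argument exactly as in the paper. Alternatively one can show directly that $\ker\phi$ is nil by passing to the commutative quotient $\Sigma(W,p)/\rho_1({\cal R}_W)$, where the images of your generators span a nil ideal because nilpotents form an ideal in a commutative ring; but some appeal to this commutativity seems unavoidable, and ``the triangular structure of the $\overline{a}_{JKL}$'' alone does not control products of the generators taken in arbitrary order.
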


Let $r$ be the number of rows of $M^c(W)$ and let $s$ be the
number of
rows indexed by subsets $J$ with $p\not|[N_{W}(W_{J}):W_{J}]$.

\begin{lemma}
\label{rank}
\begin{enumerate}
\item $M^c(W)$ is a lower triangular matrix of rank $r=\dim G_{W}$
\item The $p$-rank of $M^c(W)$ (i.e. the rank of $M^c(W)$ modulo $p$ or
$\dim G(W,p)$) is $s$
\end{enumerate}
\end{lemma}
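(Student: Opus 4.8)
The plan is to deduce both parts from the shape of the ordinary table of marks together with Lemma \ref{betaJK=aJKK} and the Corollary of Section \ref{marks-section}. For part (1), first I would fix an ordering of $E$ that refines the subconjugacy order on the $W_J$ (for instance, list the subsets $J\in E$ by non-decreasing $\Size{W_J}$). Since $M^c(W)$ is the submatrix of the full table of marks $M(W)$ on the rows and columns indexed by $E$, and since the formula for $\Size{\Fix_{G/G_i}(G_j)}$ recalled before Burnside's theorem shows $\beta_{JK}=0$ unless $W_K$ is subconjugate to $W_J$, this ordering makes $M^c(W)$ lower triangular; its diagonal entries are $\beta_{JJ}=\Size{N_W(W_J):W_J}\neq 0$ by Lemma \ref{betaJK=aJKK}, so $M^c(W)$ has rank $r$ over $\mathbb{Q}$.

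To identify $r$ with $\dim G_W$: conjugate subsets $J,K$ give isomorphic $W$-sets $W/W_J\cong W/W_K$, hence equal permutation characters, so $G_W$ is spanned by the $r$ characters $\chi_J$ with $J\in E$, giving $\dim G_W\le r$. By the Corollary, every value $\chi_J(w)$ equals $\beta_{JK}$ for the unique $K\in E$ with $\Group{w}^c$ conjugate to $W_K$, while conversely $\beta_{JK}=\chi_J(c_K)$; hence the assignment $\chi_J\mapsto(\beta_{JK})_{K\in E}$ extends to a well-defined linear isomorphism from $\mathrm{span}\{\chi_J\mid J\in E\}$ onto the row space of $M^c(W)$, and therefore $\dim G_W=r$.

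For part (2) I would first note that the rows indexed by the $r-s$ subsets $J$ with $p\mid\Size{N_W(W_J):W_J}$ vanish modulo $p$: since $\beta_{JJ}$ divides $\beta_{JK}$ for every $K$ (Lemma \ref{betaJK=aJKK}), $p\mid\beta_{JJ}$ forces $p\mid\beta_{JK}$ for all $K$, i.e.\ the whole $J$-th row of $M^c(W)$ is $\equiv 0\pmod p$. Hence the $p$-rank of $M^c(W)$ is at most $s$. For the reverse inequality, restrict the lower-triangular ordering above to the $s$ subsets $J$ with $p\nmid\Size{N_W(W_J):W_J}$: the resulting $s\times s$ principal submatrix of $M^c(W)$ is still lower triangular and now has diagonal entries $\beta_{JJ}$ that are units in ${\cal F}_p$, so it is invertible over ${\cal F}_p$ and the $p$-rank of $M^c(W)$ is at least $s$, hence exactly $s$. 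Finally $G(W,p)$ is spanned over ${\cal F}_p$ by the reductions $\rho_2(\chi_J)$, and exactly as in part (1) — using that $\rho_2(\chi_J)(c_K)$ is the reduction of $\beta_{JK}$ and the Corollary — the map $\rho_2(\chi_J)\mapsto(\beta_{JK}\bmod p)_{K\in E}$ is a linear isomorphism from $\mathrm{span}\{\rho_2(\chi_J)\}$ onto the row space of $M^c(W)$ over ${\cal F}_p$, so $\dim G(W,p)$ equals this $p$-rank, namely $s$.

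The routine ingredients — lower triangularity and the nonvanishing of the diagonal — are immediate from the earlier material; the point needing care is the pair of identifications $\dim G_W=\mathrm{rank}\,M^c(W)$ and $\dim G(W,p)=(\text{the }p\text{-rank of }M^c(W))$, that is, verifying that passing from a (reduced) permutation character $\chi_J$ to its vector of marks $(\beta_{JK})_K$ is a linear bijection onto the (reduced) row space. I expect that to be the only step requiring a genuine argument; the rest is bookkeeping with the ordering of $E$.
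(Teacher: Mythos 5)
Your proof is correct and follows essentially the same route as the paper's: lower triangularity with non-zero diagonal entries $\beta_{JJ}=\Size{N_W(W_J):W_J}$ for part (1), and for part (2) the observation from Lemma \ref{betaJK=aJKK} that $p\mid\beta_{JJ}$ forces the whole row to vanish mod $p$, leaving an invertible $s\times s$ triangular block. You simply spell out the identifications $\dim G_W=\mathrm{rank}\,M^c(W)$ and $\dim G(W,p)=(\text{$p$-rank})$, which the paper treats as immediate consequences of the Corollary in Section \ref{marks-section}.
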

\begin{proof}
The first part follows from Section \ref{marks-section}.
If $p$ divides a diagonal entry of $M^c(W)$ then, by Lemma
~\ref{betaJK=aJKK},
$p$ divides every entry of that row.  Thus the
rank of $M^c(W)\mod p$ (i.e. $\dim G(W,p)$) is the number of non-zero rows
in $M^c(W) \mod p$ and this, by Lemma ~\ref{betaJK=aJKK} again, is $s$.
\end{proof}

\begin{lemma}
\label{co-mutt}
\begin{enumerate}
\item $\Sigma(W,p)/\rad(\Sigma(W,p))$ is commutative.
\item Every nilpotent element of $\Sigma(W,p)$ lies in $\rad(\Sigma(W,p))$
\end{enumerate}
\end{lemma}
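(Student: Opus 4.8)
The plan is to exploit the algebra homomorphism $\phi\colon\Sigma(W,p)\to G(W,p)$ and the fact that $G(W,p)$ is a commutative ring. For part~(1), I would first observe that $G(W,p)$ is commutative: it is the image under $\rho_2$ of the ring $G_W$ of generalised characters, and pointwise multiplication of class functions is commutative, so $G_W$ (hence its quotient $G(W,p)$) is a commutative ring. Therefore the commutator ideal $[\Sigma(W,p),\Sigma(W,p)]$ — the span of all $\overline{x}_J\overline{x}_K-\overline{x}_K\overline{x}_J$ — lies in $\ker\phi$. The key point I then need is that $\ker\phi\subseteq\rad(\Sigma(W,p))$, so that $\Sigma(W,p)/\rad(\Sigma(W,p))$, being a quotient of the commutative ring $\Sigma(W,p)/\ker\phi$, is commutative. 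The inclusion $\ker\phi\subseteq\rad$ should follow because $\ker\phi$ is a nilpotent ideal: by Lemma~\ref{rank}, $\phi$ induces a surjection onto $G(W,p)$ whose dimension $s$ accounts for a codimension-$(r-s)$ kernel, and one expects $\ker\phi$ to be nilpotent because $\Sigma(W,p)$ is a "triangularisable" algebra coming from the lower-triangular table of marks $M^c(W)$. Concretely, I would argue that $\phi$ is the reduction mod~$p$ of $\theta$, whose kernel is $\rad(\Sigma_W)$ (Theorem~\ref{so-lomon2}), and track how nilpotency survives reduction; alternatively, use that $M^c(W)\bmod p$ has exactly $s$ nonzero rows, giving an explicit spanning set for $\ker\phi$ consisting of the $\overline{x}_J-\overline{x}_K$ for conjugate $J,K$ together with the $\overline{x}_J$ with $p\mid[N_W(W_J):W_J]$, and check these generate a nilpotent ideal.

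For part~(2), I would use the standard fact that in a finite-dimensional algebra $A$ over a field, $\rad(A)$ is the largest nilpotent ideal, and that $A/\rad(A)$ is semisimple. Given a nilpotent element $n\in\Sigma(W,p)$, its image $\bar n$ in $\Sigma(W,p)/\rad(\Sigma(W,p))$ is nilpotent in a semisimple \emph{commutative} algebra (using part~(1)). A commutative semisimple finite-dimensional $\mathcal F_p$-algebra is a product of fields, which has no nonzero nilpotents, so $\bar n=0$, i.e.\ $n\in\rad(\Sigma(W,p))$.

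The main obstacle is establishing that $\ker\phi$ is contained in $\rad(\Sigma(W,p))$ — equivalently, that $\ker\phi$ is nilpotent — since the reverse containment and everything in part~(2) are then formal. The cleanest route is probably to exhibit the spanning set of $\ker\phi$ asserted in Theorem~\ref{ma-in2} and verify directly, using Solomon's multiplication rule (Theorem~\ref{so-lomon1}) reduced mod~$p$ together with Lemma~\ref{betaJK=aJKK}, that products of these basis elements again lie in $\ker\phi$ and that sufficiently long products vanish; the lower-triangularity of $M^c(W)$ is what makes the nilpotency bound available. I would flag that this argument is, in effect, doing part of the work of Theorem~\ref{ma-in2}, so care is needed to avoid circularity: here one only needs the \emph{inclusion} $\ker\phi\subseteq\rad$, which follows from nilpotency of $\ker\phi$ alone and does not require identifying $\rad$ exactly.
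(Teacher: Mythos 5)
Your part (2) is exactly the paper's argument. For part (1), however, your route runs through $\ker\phi$ and therefore hinges on the claim that $\ker\phi$ is a nilpotent ideal of $\Sigma(W,p)$, and that is where the proposal has a genuine gap. Your first suggested justification (``track how nilpotency survives reduction'' from $\ker\theta=\rad(\Sigma_W)$) cannot work as stated: whenever $p$ divides some $[N_W(W_J):W_J]$, the ideal $\ker\phi$ is strictly larger than the image of $\rad(\Sigma_W)\cap{\cal Z}_W$, and the extra elements $\overline{x}_J$ are not reductions of anything nilpotent in characteristic zero. Your second justification (exhibit the spanning set and ``check these generate a nilpotent ideal'') is also incomplete as written: in a noncommutative algebra an ideal spanned by individually nilpotent elements need not be nilpotent, so showing each $\overline{x}_J$ with $p\mid[N_W(W_J):W_J]$ is nilpotent (which is the computation done later in the paper's proof of Theorem~\ref{ma-in2}) does not by itself give nilpotency of $\ker\phi$. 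To repair it you would first have to pass to the quotient by the nilpotent ideal $\rho_1({\cal R}_W)$, observe that this quotient is commutative because it is a homomorphic image of $G_W$, and only then conclude that the ideal generated by nilpotent elements there is nilpotent --- at which point you have reconstructed the paper's argument and no longer need $\ker\phi$ at all.

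Indeed, the paper's proof is substantially lighter precisely because it never mentions $\phi$ or $G(W,p)$ in this lemma. It takes $N=\rho_1({\cal R}_W)$, the image of the characteristic-zero radical; this is nilpotent for free (a surjective homomorphic image of a nilpotent ideal), hence contained in $\rad(\Sigma(W,p))$, and $\Sigma(W,p)/\rad(\Sigma(W,p))$ is then a quotient of ${\cal Z}_W/{\cal R}_W\cong G_W$, which is commutative. The inclusion $\ker\phi\subseteq\rad(\Sigma(W,p))$ that you want is actually half of Theorem~\ref{ma-in2}, and the paper proves it \emph{after} this lemma, using part (2) to place the individually nilpotent elements $\overline{x}_J$ into the radical; so the circularity you flagged is real and is resolved in the paper by ordering the arguments the other way around. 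I recommend you replace your part (1) with the $\rho_1({\cal R}_W)$ argument and keep your part (2) as is.
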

\begin{proof}
Let $\theta_{1}$ be the restriction of $\theta$ to ${\cal
Z}_{W}$.  Then $\theta_{1}$ maps ${\cal Z}_{W}$ onto the commutative
ring $G_{W}$.  By Theorem \ref{so-lomon2}, the kernel of $\theta_{1}$ is
the $Z$-module ${\cal R}_{W}$
spanned by all $x_{J}-x_{K}$ where
$J$ and $K$ are conjugate subsets of $S$, and is a nilpotent ideal of
${\cal Z}_{W}$.  In particular \( \rho_1({\cal R}_W) \) is a nilpotent
ideal of \( \Sigma(W,p) \), and
therefore \( \rho_1({\cal R}_W)\subseteq\rad(\Sigma(W,p)) \).  Hence there
exists
an ideal \( {\cal S}_W \) of \( \Sigma_W \), the pre-image of \(
\rad(\Sigma(W,p)) \), such that \( {\cal R}_W\subseteq {\cal S}_W \) and
${\cal
S}_W/{\cal P}_W \cong \rad(\Sigma(W,p))$.  Since \( \Sigma(W,p) \cong
{\cal Z}_W/{\cal P}_W \), \(\Sigma(W,p)/\rad(\Sigma(W,p)) \cong {\cal
Z}_W/{\cal S}_W \) is a homomorphic image of \( {\cal Z}_W/{\cal R}_W
\cong G_W \).  Since the latter ring is commutative the first part
follows.

If $x$ is any nilpotent element of $\Sigma(W,p)$ then the coset
$x+\rad (\Sigma(W,p))$ is a nilpotent element in the commutative
semi-simple algebra $\Sigma(W,p)/\rad(\Sigma(W,p))$ and so is zero.
Therefore $x\in\rad(\Sigma(W,p))$ proving the second part.
\end{proof}

\begin{proof} of Theorem ~\ref{ma-in2}.

First we note that $\rad(\Sigma(W,p))\subseteq\ker\phi$.  This
is because the image of $\phi$ is a space of functions defined over a
field and
is therefore semi-simple. Consequently the two-sided nilpotent ideal
$\phi(\rad(\Sigma(W,p)))$ must be zero.

Now we prove that, if $p|[N_{W}(W_{J}):W_{J}]$, then
$\overline{x}_{J}\in\rad(\Sigma(W,p))$.  From the definition of
$a_{JKL}$ in Theorem ~\ref{so-lomon1}, $\overline{a}_{JKL}=0$ unless
$L\subseteq
K$ and, by Lemma ~\ref{betaJK=aJKK}, $\overline{a}_{JKK}=0$ also.  Thus
$\overline{x}_{J}\overline{x}_{K}$ is a linear combination of
elements $\overline{x}_{L}$ with $L\subset K$ (and so
$|L|\leq|K|-1$).  Now, by induction, it follows that
$\overline{x}_{J}^{t}\overline{x}_{K}$ is a linear combination of
elements $\overline{x}_{L}$ with $|L|\leq|K|-t$ and so
$\overline{x}_{J}^{|K|+1}\overline{x}_{K}$=0 for all $K$.  In
particular $\overline{x}_{J}$ is nilpotent and so
$\overline{x}_{J}\in\rad(\Sigma(W,p))$ by Lemma ~\ref{co-mutt}.

The elements $\overline{x}_{J}-\overline{x}_{K}$ where $J$ and $K$ are
conjugate subsets of $S$ are all nilpotent and, by Lemma
~\ref{co-mutt}, lie in $\rad(\Sigma(W,p))$.  They span a space $U$ of
dimension $\dim\rad(\Sigma_{W})=\dim\Sigma_{W}-\dim
G_{W}=2^{n-1}-r$.  In addition there are $r-s$ elements
$\overline{x}_{J}$ corresponding to those rows of $M^c(W)$ for which
$p|[N_{W}(W_{J}):W_{J}]$ which also lie in $\rad(\Sigma(W,p))$.
These, together with $U$, span a space of dimension
$2^{n-1}-r+(r-s)=2^{n-1}-\dim G(W,p)=\dim \ker\phi$.  Hence
$\dim\rad(\Sigma(W,p))\geq\dim\ker\phi$.

This proves that $\ker\phi=\rad(\Sigma(W,p))$ as required and that it
is spanned by the desired set of elements.
\end{proof}

\section{Representation Theory of
$\Sigma(W,p)$}\label{representation-section}

The representation theory of $\Sigma_{W}$ has not been much studied in
general although some results for the Coxeter groups of types $A$ and
$B$ have been found \cite{A&vW,B&B,Ber,G&R}.  In this section we show how
the representation theory of $\Sigma(W,p)$ depends on that
of $\Sigma_{W}$.  Specifically, we shall be interested in the
composition factors of the principal indecomposable modules
(indecomposable summands of the regular module) for each of $\Sigma_{W}$
and $\Sigma(W,p)$.  The first observation is straightforward: a
representation of $\Sigma_{W}$ over ${\cal F}_{p}$ necessarily has
$p{\cal Z}_W$ in its kernel
and so induces a representation of $\Sigma(W,p)$; moreover, every
representation of $\Sigma(W,p)$ arises in this way.  Therefore we may
study the representation theory of $\Sigma(W,p)$ by examining the
$p$-modular representations of $\Sigma_{W}$.  We do this in the manner
pioneered in group theory: by relating the representations in
characteristic zero to those in characteristic $p$ via a
decomposition matrix.

This approach is tractable because the irreducible representations
are all $1$-dimensional.  In fact, since $\Sigma_{W}/\rad(\Sigma_{W})$
and $\Sigma(W,p)/\rad(\Sigma(W,p))$ are commutative of dimensions $r$
and $s$ respectively (where $r$ and $s$ have the meanings given in
the previous section) $\Sigma_{W}$ has $r$ $1$-dimensional
irreducible representations over a field of characteristic zero and $s$
$1$-dimensional irreducible representations
over a field of characteristic $p$.  It follows (see 54.16,
\cite{C&R}) that the multiplicities of the principal indecomposable
modules as direct summands
in the regular representation of both $\Sigma_{W}$ and $\Sigma(W,p)$
are all 1.

We can explicitly describe the irreducible representations.  As in
Section \ref{marks-section} let $E$
denote the set of representatives of the subsets of $S$
 that index the rows and columns of $M^c(W)$.  For
each $K\in E$ define the map $\lambda_{K}:\Sigma_{W}\rightarrow \mathbb{Q}$
 by
$$\lambda_{K}(x)=\theta(x)(c_{K})\mbox{ for all }x\in \Sigma_{W}$$
Since $\theta$ is a homomorphism it follows readily that $\lambda_{K}$
is also a homomorphism, therefore a $1$-dimensional representation of
$\Sigma_{W}$.  Notice that $\lambda_{K}$ is completely determined by
its values on basis elements $x_{J}$, that
$\lambda(x_{J})=\theta(x_{J})(c_{K})=\chi_{J}(c_{K})$,
and these values of $\lambda_{K}$ comprise the
column of the matrix $M^c(W)$ indexed by $K$.
In particular, $\lambda_{K}|_{{\cal Z}_{W}}$
takes integer values and reducing these values modulo $p$ we shall
obtain the irreducible representations in a field of characteristic
$p$.  We already knew (Lemma \ref{rank}) that the $p$-rank of $M^c(W)$
was $s$ and so the above arguments have now proved:

\begin{lemma} \label{columns}
\begin{enumerate}
\item The columns of $M^c(W)$ define the irreducible representations of
$\Sigma_{W}$
\item The columns of $M^c(W)$ modulo $p$ define the irreducible
representations of
$\Sigma(W,p)$ and $M^c(W)$ modulo $p$ has precisely $s$ distinct columns.
\end{enumerate}
\end{lemma}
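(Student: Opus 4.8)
The plan is to prove Lemma \ref{columns} by assembling pieces that are already in place. For part (1), I would argue as follows. Since $\Sigma_W/\rad(\Sigma_W)$ is commutative of dimension $r$, the algebra $\Sigma_W$ has exactly $r$ inequivalent $1$-dimensional irreducible representations, and these are precisely the algebra homomorphisms $\Sigma_W\to\mathbb{Q}$ (extended to a suitable splitting field if necessary, though here everything is already rational). We have exhibited $r$ such homomorphisms: the maps $\lambda_K$ for $K\in E$, defined by $\lambda_K(x)=\theta(x)(c_K)$. Each $\lambda_K$ is an algebra homomorphism because $\theta$ is an algebra homomorphism (Theorem \ref{so-lomon2}) and evaluation of a class function at a fixed element $c_K$ is a ring homomorphism on $G_W$. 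So it remains only to check that the $\lambda_K$, $K\in E$, are pairwise distinct. On the basis element $x_J$ we computed $\lambda_K(x_J)=\chi_J(c_K)=\beta_{JK}$ by the Corollary at the end of Section \ref{marks-section}; hence the vector of values of $\lambda_K$ on the basis $\{x_J\mid J\in E\}$ is exactly the $K$th column of $M^c(W)$. By Lemma \ref{rank}(1) the matrix $M^c(W)$ is invertible, so its $r$ columns are distinct (indeed linearly independent), and therefore the $\lambda_K$ are distinct. This gives all $r$ irreducible representations of $\Sigma_W$, proving (1).

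For part (2), I would pass to characteristic $p$ by reduction modulo $p$. The irreducible representations of $\Sigma(W,p)$ are the $1$-dimensional representations, i.e. the algebra homomorphisms $\Sigma(W,p)\to\mathcal{F}_p$, and by Lemma \ref{co-mutt}(1) together with the computation of $\dim\Sigma(W,p)/\rad(\Sigma(W,p))=s$ from Section \ref{radical-section}, there are exactly $s$ of them. Each restriction $\lambda_K|_{\mathcal{Z}_W}$ takes integer values (its values on the $x_J$ are the integers $\beta_{JK}$), so composing with reduction mod $p$ and using that $\lambda_K$ kills $p\mathcal{Z}_W$, we obtain homomorphisms $\overline{\lambda}_K:\Sigma(W,p)\to\mathcal{F}_p$ whose value vectors on the $\overline{x}_J$ are the columns of $M^c(W)\bmod p$. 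By Lemma \ref{rank}(2) the matrix $M^c(W)\bmod p$ has $p$-rank $s$, and in fact — since whenever $p$ divides a diagonal entry it divides the whole row, so the nonzero rows are linearly independent mod $p$ — it has exactly $s$ distinct columns (the $r-s$ "extra" columns coincide with earlier ones). Thus the $\overline{\lambda}_K$ realize exactly $s$ distinct homomorphisms $\Sigma(W,p)\to\mathcal{F}_p$, which must therefore be all of them, proving (2).

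The main point requiring care — and the step I would expect a referee to want spelled out — is the claim that $M^c(W)\bmod p$ has \emph{precisely} $s$ distinct columns, not merely $p$-rank $s$. Having $p$-rank $s$ guarantees at most... well, it guarantees the column space is $s$-dimensional, but a priori there could be more than $s$ distinct column vectors lying in that space. What saves us is the triangular structure: by Lemma \ref{betaJK=aJKK}, if $p\mid\beta_{JJ}=[N_W(W_J):W_J]$ then $p\mid\beta_{JK}$ for all $K$, so the $r-s$ rows indexed by such $J$ are identically zero mod $p$; deleting them leaves an $s\times r$ matrix which is still in (permuted) row-echelon form with the $s$ pivot columns among the $s$ columns indexed by $E$-sets $K$ with $p\nmid\beta_{KK}$ — and those $s$ columns, being the pivot columns, are pairwise distinct. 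One then checks each of the remaining $r-s$ columns is determined by (indeed equals, after the triangular solve) one of these. So exactly $s$ distinct columns occur, matching the count $s=\dim\Sigma(W,p)/\rad(\Sigma(W,p))$ of irreducibles, and the two lists agree.
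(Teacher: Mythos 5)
Your argument follows the same route as the paper: the homomorphisms $\lambda_K(x)=\theta(x)(c_K)$, the identification of their value vectors with the columns of $M^c(W)$, and the dimension counts $r$ and $s$. Part (1) is fine, and the overall structure of part (2) is also right. However, the step you yourself single out as delicate is not correctly justified. The triangular structure of $M^c(W)\bmod p$ does show that the $s$ columns indexed by $F$ are linearly independent, hence pairwise distinct; but no amount of ``triangular solving'' can show that each of the remaining $r-s$ columns \emph{equals} one of these. Pure linear algebra only places a non-pivot column somewhere in the $s$-dimensional column space --- a priori it could be zero, or the sum of two pivot columns. What forces it to coincide with a pivot column is the multiplicative structure: each column is an algebra homomorphism $G(W,p)\to{\cal F}_p$ (evaluation at $c_K$), distinct homomorphisms into a field are linearly independent, and these functionals live on the $s$-dimensional algebra $G(W,p)$, so at most $s$ distinct columns can occur. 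Combined with the lower bound coming from the $p$-rank, this gives exactly $s$. Equivalently, your own earlier count --- that there are exactly $s$ algebra homomorphisms $\Sigma(W,p)\to{\cal F}_p$ in total --- already yields the upper bound and makes the matrix manipulation unnecessary; you should lean on that rather than on the echelon form.

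One further small point: that count of $s$ homomorphisms does not follow from commutativity and $\dim\Sigma(W,p)/\rad(\Sigma(W,p))=s$ alone, since a commutative semisimple ${\cal F}_p$-algebra of dimension $s$ could be the field ${\cal F}_{p^s}$, which admits no homomorphism onto ${\cal F}_p$ when $s>1$. You need the quotient to be split, which follows from Theorem \ref{ma-in2}: $\Sigma(W,p)/\rad(\Sigma(W,p))\cong G(W,p)$ is a subring of the ring of ${\cal F}_p$-valued functions on $W$, hence isomorphic to ${\cal F}_p^s$, and its $s$ homomorphisms to ${\cal F}_p$ are evaluations at elements of $W$, each of which coincides with some $\overline{\lambda}_K$ by the Corollary of Section \ref{marks-section}. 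With these two repairs your proof is correct and is essentially the paper's argument.
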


According to this lemma the set $E$ indexes the irreducible
representations of $\Sigma_{W}$.  We now select a subset $F\subseteq
E$  to index the irreducible representations of $\Sigma(W,p)$.  In
principle any subset  that indexes $s$ distinct columns of $M^{c}(W)
\mod p$ will suffice but we shall make a specific choice so that our
results are easier to state.  In $M^{c}(W)\mod p$ there are exactly $s$
non-zero rows (see the proof of Lemma \ref{columns}) and we let
$F\subseteq E$ index
this set of rows.  Since $M^{c}(W)\mod p$ is lower triangular of rank
$p$, $F$ also indexes a set of distinct columns of $M^{c}(W)\mod p$.
  We define a matrix
$D=(d_{KL})$ whose rows and columns are indexed by the members of $E$
and $F$ respectively. If $K\in E, L\in F$ then $d_{KL}=1$ if columns $K$
and $L$ of $M^c(W)$ are
equal modulo $p$, $d_{KL}=0$ otherwise.  By the previous lemma,
the sets $E$ and $F$ index
the irreducible representations of $\Sigma_{W}$ and $\Sigma(W,p)$
respectively and,
since $D$ determines the structure of each irreducible representation of
$\Sigma_{W}$ when reduced modulo $p$, we have

\begin{proposition} $D$ is the decomposition matrix of the algebra
$\Sigma_{W}$.
\end{proposition}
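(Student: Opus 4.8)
The plan is to unwind the definition of the decomposition matrix of $\Sigma_{W}$ relative to the standard $p$-modular system attached to the order ${\cal Z}_{W}\subseteq\Sigma_{W}$ and reduction modulo $p$, and then to check that the numbers it produces coincide with the entries $d_{KL}$. Recall that this decomposition matrix records, for each ordinary irreducible $\lambda_{K}$ ($K\in E$), the multiplicities of the modular irreducibles $T_{L}$ ($L\in F$) as composition factors of a modular reduction of $\lambda_{K}$; by the Brauer--Nesbitt theorem these multiplicities do not depend on the ${\cal Z}_{W}$-lattice chosen in the representation space of $\lambda_{K}$. So the first step is, for each $K$, to exhibit such a reduction and to read off its composition factors.

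First I would observe that the reduction is forced to be a single modular irreducible. Since $\lambda_{K}|_{{\cal Z}_{W}}$ takes integer values --- its value on $x_{J}$ is $\chi_{J}(c_{K})=\beta_{JK}$, the $K$-th column of $M^{c}(W)$ --- the rank-one $\Z$-module ${\cal Z}_{W}\cdot v$ spanned by a basis vector $v$ of the representation space is a ${\cal Z}_{W}$-stable lattice, and any lattice in a one-dimensional module is homothetic to it, so the reduction is canonical. Reducing modulo $p$ yields the one-dimensional representation $\overline{\lambda}_{K}$ of $\Sigma(W,p)$ with $\overline{\lambda}_{K}(\overline{x}_{J})=\beta_{JK}\bmod p$, that is, the $K$-th column of $M^{c}(W)$ read modulo $p$. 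Being one-dimensional, $\overline{\lambda}_{K}$ is annihilated by $\rad(\Sigma(W,p))$ (the radical acts nilpotently, hence as zero, on a one-dimensional module), so it is itself irreducible and therefore isomorphic to exactly one of the $T_{L}$.

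Next I would pin down which $T_{L}$ this is. By Lemma~\ref{columns} and the choice of $F$, the columns of $M^{c}(W)\bmod p$ indexed by $F$ form a complete irredundant list of the distinct columns of $M^{c}(W)\bmod p$, and $T_{L}=\overline{\lambda}_{L}$ for $L\in F$. Two one-dimensional representations of $\Sigma(W,p)$ agree precisely when their values on every $\overline{x}_{J}$ agree, i.e. when the corresponding columns of $M^{c}(W)$ coincide modulo $p$. Hence there is a unique $L\in F$ with $\overline{\lambda}_{K}\cong T_{L}$, namely the $L$ whose column equals the $K$-th column modulo $p$. Consequently, in the Grothendieck group of $\Sigma(W,p)$, the class of the reduction of $\lambda_{K}$ equals $[T_{L}]=\sum_{L'\in F}d_{KL'}[T_{L'}]$, since by definition $d_{KL'}=1$ for exactly this $L'=L$ and $d_{KL'}=0$ otherwise. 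Thus the $(K,L)$-entry of the decomposition matrix is $d_{KL}$, which is the assertion.

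The argument is essentially bookkeeping, so I do not expect a genuine obstacle; the only points that deserve care are the two appeals to general theory --- that the decomposition numbers are well defined independently of the lattice, and that every modular irreducible of $\Sigma(W,p)$ actually occurs as a reduction of an ordinary irreducible. Both are easy here because all irreducibles, in characteristic $0$ and in characteristic $p$, are one-dimensional: the first follows from homothety of rank-one lattices, and the second is exactly Lemma~\ref{columns}(2) together with the equality $s=\dim\bigl(\Sigma(W,p)/\rad(\Sigma(W,p))\bigr)$ established in Section~\ref{radical-section}. It is also worth recording explicitly that $\mathbb{Q}$ and ${\cal F}_{p}$ are splitting fields for $\Sigma_{W}$ and $\Sigma(W,p)$ respectively (again since the irreducibles are one-dimensional), so that the decomposition matrix is legitimately indexed by $E$ and $F$ with no passage to a field extension.
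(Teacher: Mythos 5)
Your proposal is correct and takes essentially the same route as the paper: the paper's own justification is just the sentence preceding the proposition, appealing to Lemma~\ref{columns} together with the observation that reducing the integer-valued one-dimensional representation $\lambda_{K}$ modulo $p$ yields the $K$-th column of $M^{c}(W)$ modulo $p$, which matches a unique column indexed by $F$. Your write-up simply makes explicit the points the paper leaves implicit (lattice-independence of the reduction and the identification of the unique $L \in F$), so no further comparison is needed.
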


Of course, the decomposition matrix can be defined in a much more
general context.  Whenever we have a finite dimensional algebra where
reduction modulo $p$ makes sense we can let $\{\tau_{i}\}$ be its irreducible
representations in characteristic zero, $\{\upsilon_{j}\}$ its irreducible
representations in characteristic $p$, and define $d_{ij}$ to be the
multiplicity of $\upsilon_{j}$ as a composition factor of $\tau_{i}$ when
$\tau_{i}$ is reduced
modulo $p$.  In our case the situation is quite simple: as all the
irreducible representations in question are $1$-dimensional these
multiciplicities are either $0$ or $1$.

However, it is convenient to remain with the more general situation
for a little longer.  So let ${\cal E}$ be an algebraically closed complete
local field.  Then ${\cal E}$ is the field of fractions of a principal ideal
domain $U$, $U$ has a maximal ideal $P$, and ${\cal F}=U/P$ is a field of
prime characteristic $p$.  Of course, for descent algebras we have
been working over the rational field but, because their irreducible
representations are $1$-dimensional, we can extend to a larger field
without any significant changes.

Let $A$ be an associative algebra over ${\cal E}$ with an order ${\cal
D}\subset A$.  Then $\bar{\cal D}={\cal D}/P{\cal D}$ is an algebra
over the field ${\cal F}$.  Moreover, for every ${\cal D}$-module $M$,
$\bar{M}=M/PM$ is a $\bar{\cal D}$-module in a natural way.

Suppose that $P_{1},P_{2},\ldots,P_{r}$ are a full set of principal
indecomposable modules for ${\cal D}$ over ${\cal E}$ and that
$T_{1},T_{2},\ldots,T_{r}$ are the associated irreducible modules
(recall that $P_{i}$ has a unique maximal submodule $\rad(P_{i})$ and
that $T_{i}\cong P_{i}/\rad(P_{i})$).  The Cartan matrix of ${\cal
D}$ is an $r\times r$ matrix $C=(c_{ij})$ whose $(i,j)$ entry is the
number of times that $T_{j}$ occurs as a composition factor of
$P_{i}$.  It is known (see Theorem 11.10 in \cite{Bur})
that $c_{ij}$ is the intertwining number
$i(P_{j},P_{i})=\dim_{\cal E}\mbox{Hom}(P_{j},P_{i})$.

In an exactly analogous way let $Q_{1},Q_{2},\ldots,Q_{s}$ be a full
set of principal indecomposable modules for $\bar{\cal D}$ over the
field ${\cal F}$ with associated irreducible modules
$U_{1},U_{2},\ldots,U_{s}$ and let
$\tilde{c}_{ij}=i(Q_{j},Q_{i})=\dim_{\cal F}\mbox{Hom}(Q_{j},Q_{i})$ be the
Cartan matrix
$\tilde{C}$ of $\bar{\cal D}$.

The algebras ${\cal D}$ and $\bar{\cal D}$ are related by the
decomposition matrix $D$ which describes how each irreducible ${\cal
D}$-module $T_{i}$ behaves when reduced ``mod $p$''.  Specifically,
$D=(d_{ij})$
is an $r\times s$ matrix where $d_{ij}$ is the number of composition
factors of $\bar{T}_{i}$ which are isomorphic to $U_{j}$.  Again
by, Theorem 11.10 of \cite{Bur}, $d_{ij}=i(Q_{j},\bar{T}_{i})$.

By \cite{Bur} Theorem 37.4 there exist direct summands $R_{i}$ of the
regular ${\cal
D}$-module such that $\bar{R}_{j}=Q_{j}$ and we may write
$$R_{j}=\bigoplus_{k} h_{kj}P_{k}$$
where this equation signifies that $R_{j}$ can be expressed as a sum
of principal indecomposable modules, and that the module $P_{k}$
occurs as an isomorphism type $h_{kj}$ times.

At this point we note two applications of \cite{Bur} Lemma 38.1:
$$d_{ij}=i(Q_{j},\bar{T}_{i})=i(\bar{R}_{j},\bar{T}_{i})=i(R_{j},T_{i})$$
and
$$\tilde{c}_{ij}=i(Q_{j},Q_{i})=i(\bar{R}_{j},\bar{R}_{i})=i(R_{j},R_{i})$$

 From the first of these we have
\begin{eqnarray*}
d_{ij}&=&i(\bigoplus_{k} h_{kj}P_{k},T_{i})\\
&=&\sum_{k} h_{kj}i(P_{k},T_{i})\\
&=&h_{ij}
\end{eqnarray*}
since $i(P_{k},T_{i})=0$ unless $k=i$ and $i(P_{k},T_{k})=1$.

 From the second we obtain
\begin{eqnarray*}
\tilde{c}_{ij}&=&i(\bigoplus_{k} h_{kj}P_{k},\bigoplus_{l} h_{li}P_{l})\\
&=&\sum_{k,l} d_{kj}d_{li}i(P_{k},P_{l})\\
&=&\sum_{k,l} d_{kj}d_{li}c_{kl}
\end{eqnarray*}
In other words we have
\begin{theorem}
\label{C=DCD}
 $\tilde{C}=D^{T}CD$
\end{theorem}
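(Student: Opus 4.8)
The plan is to follow the standard modular-representation-theory route that relates the Cartan matrices of $\mathcal{D}$ and $\bar{\mathcal{D}}$ through the decomposition matrix, using precisely the three ingredients the excerpt has lined up: Brauer-type reciprocity for intertwining numbers (\cite{Bur} Theorem 11.10), the existence of lifts $R_j$ of the principal indecomposables $Q_j$ (\cite{Bur} Theorem 37.4), and the stability of intertwining numbers under reduction mod $p$ (\cite{Bur} Lemma 38.1). First I would recall that $c_{ij}=i(P_j,P_i)$ and $\tilde{c}_{ij}=i(Q_j,Q_i)$, and that $d_{ij}=i(Q_j,\bar{T}_i)$; these are the translations of ``composition factor multiplicity'' into ``dimension of a Hom space'' that make the argument purely multiplicative.

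Next I would establish the two identities that Lemma 38.1 delivers, namely $d_{ij}=i(R_j,T_i)$ (from $i(Q_j,\bar T_i)=i(\bar R_j,\bar T_i)=i(R_j,T_i)$) and $\tilde{c}_{ij}=i(R_j,R_i)$ (from $i(Q_j,Q_i)=i(\bar R_j,\bar R_i)=i(R_j,R_i)$). The point of passing to the lifts $R_j$ is that they live over $\mathcal{E}$, where every module is a direct sum of the $P_k$'s, so I can write $R_j=\bigoplus_k h_{kj}P_k$ and compute everything by bilinearity of $i(-,-)$. Expanding the first identity gives $d_{ij}=\sum_k h_{kj}\,i(P_k,T_i)=h_{ij}$, using the orthogonality $i(P_k,T_i)=\delta_{ki}$ (the unique-top property of a projective cover). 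So the lift coefficients $h_{kj}$ are exactly the decomposition numbers $d_{kj}$ — this is the crucial bridge.

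Then I would substitute $R_j=\bigoplus_k d_{kj}P_k$ and $R_i=\bigoplus_l d_{li}P_l$ into $\tilde{c}_{ij}=i(R_j,R_i)$ and expand bilinearly to get $\tilde{c}_{ij}=\sum_{k,l} d_{kj}d_{li}\,i(P_k,P_l)=\sum_{k,l} d_{kj}d_{li}c_{kl}$. Reading this as a matrix product, the $(i,j)$ entry of $\tilde{C}$ equals $\sum_{k,l}(D^T)_{ik}C_{kl}D_{lj}$, i.e. $\tilde{C}=D^TCD$, which is the claim. In the descent-algebra application all the $P_i$ and $Q_j$ have $1$-dimensional tops and the entries of $D$ are $0/1$, but none of that is needed for the identity itself — it holds for any algebra with an order over such an $\mathcal{E}$.

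The main obstacle, and the place where I would be most careful, is not the bilinear bookkeeping but the invocation of \cite{Bur} Lemma 38.1 — i.e. verifying that $i(M,N)$ really is preserved under reduction when one of the arguments is a projective lift. Concretely one needs $\mathrm{Hom}_{\mathcal{D}}(R_j,M)\otimes_U \mathcal{F}\cong \mathrm{Hom}_{\bar{\mathcal{D}}}(\bar R_j,\bar M)$, which relies on $R_j$ being $\mathcal{D}$-projective (so that $\mathrm{Hom}(R_j,-)$ is exact and the relevant $U$-modules are free, with no $\mathrm{Tor}$ obstruction), and on the order/completeness hypotheses on $U$ and $\mathcal{E}$ guaranteeing that idempotents lift and that $\bar R_j$ is indeed the projective cover $Q_j$. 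Once those hypotheses are in force — which is exactly why the excerpt set up $\mathcal{E}$ as an algebraically closed complete local field with valuation ring $U$ — the rest is a two-line computation, so I would spend the proof's real estate making the appeal to 37.4 and 38.1 precise and let the algebra fall out.
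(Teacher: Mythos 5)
Your proposal follows exactly the paper's own argument: the intertwining-number interpretation of $c_{ij}$, $\tilde{c}_{ij}$ and $d_{ij}$ via Theorem 11.10 of \cite{Bur}, the lifts $R_j$ with $\bar{R}_j=Q_j$ from Theorem 37.4, the two applications of Lemma 38.1, the identification $h_{ij}=d_{ij}$ from $i(P_k,T_i)=\delta_{ki}$, and the final bilinear expansion. The only difference is that you spend more time justifying the appeal to Lemma 38.1, which the paper simply cites; the route and all key steps are the same.
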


A proof of this theorem in the language of Groethendieck groups has
recently been given in \cite{Geck} but it is likely that the result is
not new.  Clearly our proof has drawn heavily on the approach of
Burrow  \cite{Bur} who considered the case that $A$ was a group algebra
and where $C=I$ since group algebras are semi-simple.
It is plausible that Burrow knew Theorem
\ref{C=DCD} over 30 years ago.  Nevertheless the result deserves to be
better known and we make no further apology for including it.

We return now to the special case of descent algebras.  The
decomposition matrix of a descent algebra has been defined in terms of
a table of characters of the corresponding Coxeter group.  We have the
following easy result:

\begin{proposition}\label{preg} Let $K\in E,L\in F$ head columns of the matrix
$M^c(W)$.  Then, if $c_{K}$ and $c_{L}$ have conjugate $p$-regular parts,
$d_{KL}=1$.
\end{proposition}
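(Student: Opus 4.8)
The plan is to unwind the definition of the decomposition matrix and reduce everything to a congruence for permutation character values.

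First I would recall that, by the definition given just above, $d_{KL}=1$ precisely when the columns of $M^c(W)$ indexed by $K$ and $L$ coincide modulo $p$. By the Corollary of Section~\ref{marks-section}, the entry of $M^c(W)$ in row $J$ and column $K$ is $\beta_{JK}=\chi_J(c_K)=\Size{\Fix_{W/W_J}(c_K)}$ for $J\in E$. Hence it suffices to prove that $\chi_J(c_K)\equiv\chi_J(c_L)\pmod p$ for every $J\in E$.

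The heart of the matter is the elementary fact that a permutation character is constant modulo $p$ on an element and on its $p$-regular part: if $\chi$ is the character of $W$ acting on a $W$-set $\Omega$ and $w=w_{p'}w_p\in W$, with $w_{p'}$ the $p'$-part and $w_p$ the $p$-part, then $\chi(w)\equiv\chi(w_{p'})\pmod p$. I would prove this by orbit counting: $\Fix_\Omega(w)=\Fix_\Omega(w_{p'})\cap\Fix_\Omega(w_p)$, and since $w_p$ commutes with $w_{p'}$ it leaves $\Fix_\Omega(w_{p'})$ invariant; as $w_p$ has $p$-power order, every $\langle w_p\rangle$-orbit on $\Fix_\Omega(w_{p'})$ has size a power of $p$, so the number of points of $\Fix_\Omega(w_{p'})$ not fixed by $w_p$ is divisible by $p$, giving $\Size{\Fix_\Omega(w_{p'})}\equiv\Size{\Fix_\Omega(w_{p'})\cap\Fix_\Omega(w_p)}=\Size{\Fix_\Omega(w)}\pmod p$. (Alternatively one may quote the standard fact from modular representation theory that an integer-valued ordinary character agrees with itself on $p'$-parts modulo $p$, but the orbit argument is self-contained.)

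Applying this with $\Omega=W/W_J$ and $w=c_K$ gives $\chi_J(c_K)\equiv\chi_J\bigl((c_K)_{p'}\bigr)\pmod p$, and likewise for $c_L$. By hypothesis $(c_K)_{p'}$ and $(c_L)_{p'}$ are conjugate in $W$, so $\chi_J\bigl((c_K)_{p'}\bigr)=\chi_J\bigl((c_L)_{p'}\bigr)$ because characters are class functions. Chaining these, $\chi_J(c_K)\equiv\chi_J(c_L)\pmod p$ for all $J\in E$; that is, columns $K$ and $L$ of $M^c(W)$ are equal modulo $p$, whence $d_{KL}=1$. There is no serious obstacle here: the only point needing care is the congruence $\chi(w)\equiv\chi(w_{p'})\pmod p$, and even there the orbit-counting proof is routine once one observes that $\Fix_\Omega(w)=\Fix_\Omega(w_{p'})\cap\Fix_\Omega(w_p)$ and that $w_p$ acts on $\Fix_\Omega(w_{p'})$.
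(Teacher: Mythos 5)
Your proof is correct and follows the same skeleton as the paper's: unwind the definition of $d_{KL}$ to the statement that columns $K$ and $L$ of $M^c(W)$ agree modulo $p$, identify the entries as permutation character values $\beta_{JK}=\chi_J(c_K)$, and conclude from the congruence $\chi_J(w)\equiv\chi_J(w_{p'})\pmod p$ together with the class-function property. The one place you diverge is in how that congruence is justified: the paper simply cites the arguments of \S 82 of Curtis--Reiner, which establish the general fact that an ordinary character is congruent (modulo a prime ideal over $p$) to its value on the $p$-regular part, whereas you give a self-contained orbit-counting proof using $\Fix_\Omega(w)=\Fix_\Omega(w_{p'})\cap\Fix_\Omega(w_p)$ and the fact that nontrivial orbits of the $p$-element $w_p$ on $\Fix_\Omega(w_{p'})$ have size divisible by $p$. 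Your argument is sound and has the advantage of being elementary and of staying inside $\Z$ throughout; it does exploit that the $\chi_J$ are permutation characters, so it is less general than the cited machinery, but that is all the proposition needs. Either route is acceptable.
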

\begin{proof}.  By the arguments in \S 82 of \cite{C&R} every
character $\chi_{J}$ takes equal values modulo $p$ on $c_{K}$ and
$c_{L}$.  Thus, $\lambda_{K}=\lambda_{L} \mod p$ and so $d_{KL}=1$.
\end{proof}.

In the remainder of this section we shall consider descent algebras
according to their Coxeter type.  By a combination of theoretical
argument and computer calculation we obtain a description of the
decomposition matrix in all cases and this shows that, often, the
converse of Proposition \ref{preg} is true.

We let $\pi(n)$ denote the number of partitions of $n$.  This
non-standard notation is necessary since we also define $\pi(n,p)$ as
the number of partitions of $n$ in which no part has multiplicity $p$
or more.  We note the following result (see \cite{james-kerber} p.41):

\begin{lemma}\label{partition-equality}
$\pi(n,p)$ is the number of partitions of $n$ into parts not divisible
by $p$.
\end{lemma}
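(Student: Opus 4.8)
The plan is to prove the identity by comparing generating functions, which is the standard route to partition identities of this type; the case $p=2$ is Euler's theorem equating partitions into distinct parts with partitions into odd parts, and the general statement is classically attributed to Glaisher.

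First I would record the generating function for the partitions counted by $\pi(n,p)$. In such a partition every part size $k\geq 1$ occurs with a multiplicity in $\{0,1,\dots,p-1\}$, independently for each $k$, so evaluating the finite geometric series in each factor gives
\[
  \sum_{n\geq 0}\pi(n,p)\,q^{n}
  = \prod_{k\geq 1}\bigl(1+q^{k}+q^{2k}+\cdots+q^{(p-1)k}\bigr)
  = \prod_{k\geq 1}\frac{1-q^{pk}}{1-q^{k}}.
\]

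Next I would compute the generating function for the number of partitions of $n$ all of whose parts are prime to $p$. Here each part size $k$ with $p\nmid k$ is permitted with arbitrary multiplicity while the multiples of $p$ are forbidden, so separating the full partition product into the factors indexed by multiples of $p$ and the rest yields
\[
  \prod_{\substack{k\geq 1\\ p\,\nmid\,k}}\frac{1}{1-q^{k}}
  = \frac{\prod_{k\geq 1}(1-q^{k})^{-1}}{\prod_{j\geq 1}(1-q^{pj})^{-1}}
  = \prod_{k\geq 1}\frac{1-q^{pk}}{1-q^{k}}.
\]
The two infinite products coincide, so their coefficients of $q^{n}$ agree for every $n$, and this is exactly the asserted equality.

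The computation is routine, so I do not expect a genuine obstacle; the only point worth a word of care is that all the manipulations take place in the ring of formal power series $\Z[[q]]$, where each factor affects only finitely many coefficients, so no convergence question arises. Should an explicit bijection be preferred, Glaisher's map serves: given a partition into parts prime to $p$, expand the multiplicity of each part $m$ in base $p$ and, for the digit $a_{i}$ in place $i$, introduce $a_{i}$ copies of $mp^{i}$; conversely, given a partition in which no part has multiplicity $\geq p$, extract the largest power of $p$ from each part and regroup. I would present the generating-function argument as the main proof and relegate the bijection to a remark.
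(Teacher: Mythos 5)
Your proof is correct: both generating-function computations are right, and the identity of the two products $\prod_{k\geq 1}(1-q^{pk})/(1-q^{k})$ establishes the claim; the Glaisher bijection you sketch is also valid. The paper offers no proof of its own here --- it simply cites James and Kerber, p.~41, for this classical fact --- so your argument is exactly the standard one that the cited source supplies, and nothing further is needed.
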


\subsection{Representation Theory of $\Sigma(A_{n-1},p)$}

In this subsection we let $W=A_{n-1}$ which is best described as the
symmetric group
$S_{n}$ acting in the usuail way on $\{1,2,\ldots,n\}$
with generating set $S=\{(i,i+1)|i=1,\ldots,n-1\}$.  If
$K\subseteq S$ then the Coxeter element $c_{K}$ has cycles on sets
$[u..v]$ of consecutive integers.  The ordered list of cycle lengths
(one cycle appearing before another if it permutes integers with
smaller values) determines and is determined by $K$.  Therefore the
subsets of $S$ can be parameterised by compositions of $n$.  The
following lemma and corollary are easy consequences of this
parameterisation and Lemmas \ref{rank} and \ref{columns}

\begin{lemma}
\label{norm-Sn}
\begin{enumerate}
\item If $K,L\subseteq S$ then $K$ is conjugate to $L$ if and only if
the corresponding compositions determine the same partition of $n$.
\item If $K\subseteq S$ and its corresponding composition
has $a_{i}$ components equal to $i$ (for $i=1,\ldots,n$) then
$$[N(W_{K}):W_{K}]=a_{1}!a_{2}!\ldots a_{n}!$$
\end{enumerate}
\end{lemma}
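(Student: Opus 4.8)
The plan is to verify both claims directly from the description of the Coxeter element $c_K$ and the parabolic structure of $W = S_n$.

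For part (1), I would start from the paragraph preceding the lemma: a subset $K \subseteq S$ corresponds to a way of partitioning $\{1,\dots,n\}$ into blocks of consecutive integers $[u_1..v_1], [u_2..v_2], \dots$, and $W_K$ is the Young subgroup $S_{[u_1..v_1]} \times S_{[u_2..v_2]} \times \cdots$. Two such subgroups $W_K, W_L$ are conjugate in $S_n$ if and only if their block size multisets agree, i.e. the two compositions of $n$ rearrange to the same partition. The one subtlety is that the lemma concerns conjugacy of the \emph{subsets} $K, L$ of $S$, not just of the subgroups $W_K, W_L$; I would note that for standard parabolic subgroups of a Coxeter group, $K$ and $L$ are conjugate subsets exactly when $W_K$ and $W_L$ are conjugate subgroups (this is standard, and is the convention under which $E$ was defined). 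So part (1) reduces to the elementary fact about Young subgroups, which I would dispatch in a sentence or two.

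For part (2), the key computation is the normalizer of a Young subgroup $W_K = S_{B_1} \times \cdots \times S_{B_k}$ in $S_n$, where $B_1,\dots,B_k$ are the consecutive-integer blocks with $|B_j|$ the parts of the composition. The normalizer $N_{S_n}(W_K)$ consists of permutations that permute the blocks among themselves, but only blocks \emph{of equal size} can be interchanged; concretely $N_{S_n}(W_K) \cong W_K \rtimes \prod_i S_{a_i}$ where $a_i$ is the number of blocks of size $i$. Hence $[N(W_K):W_K] = \prod_i a_i!$. I would justify the normalizer description by observing that an element normalizing $W_K$ must permute the orbits of $W_K$ on $\{1,\dots,n\}$ (which are exactly the blocks $B_j$, discarding singleton blocks of size $1$ carefully — actually blocks of size $1$ give trivial factors and $a_1!$ accounts for permuting the fixed points, which is automatic), and conversely any such block permutation composed with an element of $W_K$ normalizes it.

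The main obstacle — really the only thing requiring any care — is the bookkeeping around blocks of size $1$: a block $[u..u]$ contributes a trivial factor to $W_K$, yet the $a_1$ such singletons can be freely permuted by $S_n$ while still normalizing (indeed centralizing) $W_K$, so they must contribute $a_1!$ to the index; I would make sure the normalizer formula $N_{S_n}(W_K) = W_K \rtimes \prod_i S_{a_i}$ is stated so that this case is included rather than an exception. Everything else is routine: combine the block-permutation picture with $|W_K| = \prod_j |B_j|!$ to read off $[N(W_K):W_K] = \prod_{i=1}^n a_i!$, completing the proof.
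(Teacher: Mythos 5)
Your proof is correct. The paper gives no argument for this lemma at all---it simply asserts that it is an easy consequence of the parameterisation of subsets of $S$ by compositions of $n$---and your direct verification (conjugacy of Young subgroups is governed by the multiset of block sizes, and the normaliser of $W_K$ is the group of permutations permuting the equal-sized blocks, giving $N(W_K)\cong W_K\rtimes\prod_i S_{a_i}$ and hence index $\prod_i a_i!$, with the singleton blocks handled uniformly) is exactly the standard argument the authors are relying on.
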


\begin{corollary}\label{partitions}
\begin{enumerate}
\item $r=\pi(n)$
\item $s=\pi(n,p)$
\end{enumerate}
\end{corollary}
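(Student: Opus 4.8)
The plan is to derive both parts of Corollary~\ref{partitions} directly from Lemma~\ref{rank}, Lemma~\ref{columns}, and the combinatorial description of subsets of $S$ given just before Lemma~\ref{norm-Sn}. Recall that $r$ is the number of rows of $M^c(W)$ and that the rows are indexed by the set $E$ of representatives of conjugate subsets of $S$. So the first task is simply to count conjugacy classes of subsets of $S$ under $W$, and the second is to count those classes $[K]$ for which $p \nmid [N_W(W_K):W_K]$.

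For part (1): each subset $K \subseteq S$ corresponds to a composition of $n$ (the ordered list of cycle lengths of $c_K$), and by Lemma~\ref{norm-Sn}(1) two subsets are conjugate in $W$ exactly when their compositions rearrange to the same partition of $n$. Hence $E$ is in bijection with the set of partitions of $n$, and since every partition of $n$ does arise as the cycle type of some $c_K$ (take the composition listing the parts in any fixed order), we get $r = |E| = \pi(n)$.

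For part (2): by Lemma~\ref{rank}(2), $s$ equals the number of rows $K \in E$ with $p \nmid [N_W(W_K):W_K]$. Fix the partition $\mu$ of $n$ associated to the class of $K$ and suppose $\mu$ has $a_i$ parts equal to $i$ for each $i$. By Lemma~\ref{norm-Sn}(2), $[N_W(W_K):W_K] = a_1! a_2! \cdots a_n!$. This product is coprime to $p$ precisely when $a_i! $ is coprime to $p$ for every $i$, i.e. when $a_i < p$ for every $i$ (since $p \mid m!$ as soon as $m \ge p$). Thus the qualifying classes correspond exactly to partitions of $n$ in which no part has multiplicity $p$ or more, which is $\pi(n,p)$ by the definition of that symbol; hence $s = \pi(n,p)$.

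The only point requiring any care is the elementary number-theoretic fact that $p \mid a!$ if and only if $a \ge p$, and the observation that a product of factorials is prime to $p$ iff each factor is — both are routine. The substantive input is entirely contained in the already-established Lemmas~\ref{rank}, \ref{columns}, and \ref{norm-Sn}, so there is no real obstacle; the corollary is, as the text says, an easy consequence of the parameterisation of subsets of $S$ by compositions of $n$.
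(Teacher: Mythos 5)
Your proof is correct and is exactly the argument the paper intends: the paper gives no explicit proof, stating only that the corollary is an easy consequence of the composition/partition parameterisation together with Lemmas~\ref{rank}, \ref{columns} and \ref{norm-Sn}, and your two steps (conjugacy classes of subsets $\leftrightarrow$ partitions of $n$ for part (1); $p \nmid a_1!\cdots a_n!$ iff every $a_i < p$ for part (2)) are precisely that derivation.
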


\begin{theorem}
\label{decompA}Let $W$ be one of the Coxeter groups $A_{n-1}$ and let
$K\in E$, $L\in F$.  Then
$d_{KL}=1$ if and only if $c_{K}$ and $c_{L}$ have conjugate
$p$-regular parts.
\end{theorem}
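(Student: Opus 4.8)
The plan is to combine Proposition~\ref{preg}, which already supplies the implication $(\Leftarrow)$, with a dimension count drawn from Lemma~\ref{columns} and Corollary~\ref{partitions}. Throughout I use the parameterisation recalled in Lemma~\ref{norm-Sn} and the surrounding discussion: subsets of $S$ correspond to compositions of $n$, two subsets are conjugate iff the compositions rearrange to the same partition $\mu$ of $n$, and $c_{K}$ is then a permutation of cycle type $\mu$. So $E$ is indexed by the partitions of $n$ (hence $r=\pi(n)$) and $F$ by a set of $s=\pi(n,p)$ of them.

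First I would pin down the cycle type of a $p$-regular part. If $c$ is an $\ell$-cycle and $\ell=p^{a}m$ with $p\nmid m$, then the $p'$-part $c_{p'}$ is a power $c^{k}$ with $\gcd(k,\ell)=p^{a}$, hence a product of $p^{a}$ disjoint $m$-cycles; this is a short routine verification using $\langle c\rangle\cong\Z/p^{a}\times\Z/m$. Consequently a permutation of cycle type $\mu$ has $p$-regular part of cycle type $\widehat{\mu}$, where $\widehat{\mu}$ is obtained from $\mu$ by replacing each part $p^{a}m$ (with $p\nmid m$) by $p^{a}$ parts equal to $m$. Note $\widehat{\mu}$ is a partition of $n$ into parts coprime to $p$, and conversely every such partition arises as some $\widehat{\mu}$ (write each multiplicity as a sum of powers of $p$). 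Hence, writing $\mu,\nu$ for the partitions underlying $K,L$, the condition ``$c_{K}$ and $c_{L}$ have conjugate $p$-regular parts'' is precisely $\widehat{\mu}=\widehat{\nu}$; and the number of classes of this relation among the partitions of $n$ equals the number of partitions of $n$ into parts not divisible by $p$, which by Lemma~\ref{partition-equality} and Corollary~\ref{partitions} is exactly $s$.

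Now the counting step. Every column of $M^{c}(W)$ is indexed by some $K\in E$, i.e.\ by a partition $\mu$ of $n$, and sending $\mu$ to the residue mod $p$ of that column is a surjection from the partitions of $n$ onto the set of distinct columns of $M^{c}(W)\bmod p$, a set of size exactly $s$ by Lemma~\ref{columns}(2). By Proposition~\ref{preg} this surjection is constant on the classes of the relation $\widehat{\mu}=\widehat{\nu}$, so it factors through a surjection from the $s$-element set of those classes onto an $s$-element set; such a surjection is a bijection, hence injective. Therefore partitions with distinct $\widehat{\mu}$ index distinct columns of $M^{c}(W)\bmod p$; equivalently $d_{KL}=0$ whenever $c_{K}$ and $c_{L}$ do not have conjugate $p$-regular parts. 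Together with Proposition~\ref{preg} this proves the theorem.

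The only delicate points are the identification of the cycle type of the $p$-regular part and the remark that $\mu\mapsto\widehat{\mu}$ maps onto the partitions of $n$ into parts prime to $p$, which is what pins the number of classes to $s$; once these are in hand the rest is a pigeonhole argument against Lemma~\ref{columns}(2), and I anticipate no further obstacle.
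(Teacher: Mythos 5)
Your proposal is correct and follows essentially the same route as the paper: both arguments take the inclusion of equivalence relations supplied by Proposition~\ref{preg} and close it by counting classes on each side, using Corollary~\ref{partitions} and Lemma~\ref{partition-equality} to see that the relation ``conjugate $p$-regular parts'' has exactly $s$ classes, matching the $s$ distinct columns from Lemma~\ref{columns}. The only difference is that you spell out the cycle type $\widehat{\mu}$ of the $p$-regular part explicitly, a detail the paper leaves implicit.
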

\begin{proof}  There are two equivalence relations $\rho_{1},\rho_{2}$
on the set $E$ (which indexes the columns of $M^c(W)$):
$$(K,J)\in \rho_{1} \mbox{ if } \lambda_{K}=\lambda_{J} \mod p$$
$$(K,J)\in \rho_{2} \mbox{ if } c_{K},c_{J} \mbox{ have conjugate }
p\mbox{-regular parts}$$
We have seen (Proposition \ref{preg}) that $\rho_{2}\subseteq \rho_{1}$.
However, the number of $\rho_{1}-$equivalence classes is $s$ (Lemma
\ref{columns}) and this is $\pi(n,p)$ (Corollary \ref{partitions}).
By Lemma \ref{partition-equality} this is also the number of partitions
with no part divisible by $p$ which is the number of equivalence
classes of $\rho_{2}$.  Hence $\rho_{1}=\rho_{2}$ and the theorem
follows.
\end{proof}

\subsection{Representation Theory of $\Sigma(B_{n},p)$}

It is convenient to represent $B_{n}$ as a permutation group on
$\{\pm 1,\ldots,\pm n\}$ with block system $\{i,-i\}_{i=1}^{n}$ on
which it acts as the full symmetric group with kernel of order
$2^{n}$.  The set of Coxeter generators
$S=\{s_{0},s_{1},\ldots,s_{n-1}\}$ is defined as $s_{0}=(-1,1)$ and
$s_{i}=(i,i+1)(-i,-i-1), 1\leq i\leq n-1$.

Let $K\subseteq S$ and consider the Coxeter element $c_{K}$.  If
$c_{K}$ has a cycle $(a,b,\ldots)$ consisting of positive elements (a
positive cycle)
then it will also have a corresponding negative cycle
$(-a,-b,\ldots)$.  Furthermore, at most one cycle of $c_{K}$ can
contain both positive and negative elements; such a cycle is present
if and only if $s_{0}\in K$.  We may write
\begin{equation}
c_{K}=x_{0}x_{1} \label {x0x1}
\end{equation}
where $x_{0}$ is the cycle containing both positive and negative
elements (or $x_{0}=1$ if there is no such cycle) and $x_{1}$ is the
product of all the other cycles (positive and negative in matching
pairs); note that $x_{0}$ commutes with $x_{1}$.  Each positive cycle is on
some range $[u..v]$ of consecutive
integers and the list of lengths of positive cycles taken in the
natural order (as in the previous subsection) determines and is
determined by $K$.  In this way the subsets of $S$ can be parameterised
by compositions of integers $m, 0\leq m\leq n$.  The following result
is a consequence of the results of \cite{Howlett}.

\begin{lemma}
\label{B-normaliser}
\begin{enumerate}
\item If $K,L\subseteq S$ then $K$ is conjugate to $L$ if and only if
the corresponding compositions determine the same partition.
\item If $K\subseteq S$ and the corresponding composition is a
composition of $m$ with $a_{i}$ components of size $i$ and $t$
components in all then
$$[N(W_{K}):W_{K}]=2^{t}a_{1}!a_{2}!\ldots a_{m}!$$
\end{enumerate}
\end{lemma}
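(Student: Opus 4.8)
The plan is to parameterise subsets $K \subseteq S$ by compositions as described in the paragraph preceding the lemma, and then deduce both parts from the corresponding analysis of the normaliser structure, much as in the $A_{n-1}$ case (Lemma \ref{norm-Sn}). The key point is that the correspondence $K \leftrightarrow (\text{composition of }m,\ 0\le m\le n)$ is set up so that the positive cycles of the Coxeter element $c_K$ record a composition of $m$, while the remaining $n-m$ points are fixed (giving $n-m$ trivial positive cycles of length $1$, which we may think of as appended, or kept separate); the presence of $s_0$ merely toggles whether one cycle mixes positive and negative elements, i.e. whether $x_0 = 1$ in \eqref{x0x1}.

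First I would record the conjugacy criterion of part (1). Two parabolic subgroups $W_K$, $W_L$ of a Coxeter group are conjugate in $W$ if and only if $K$ and $L$ are ``$W$-equivalent'' subsets of $S$; for $W = B_n$ this is exactly the statement, proved in \cite{Howlett}, that the invariant distinguishing conjugacy classes of parabolic subgroups is the multiset of positive-cycle lengths of the associated Coxeter element together with the bit recording membership of $s_0$. Since the composition attached to $K$ already determines this multiset (its underlying partition) and $m$ (hence whether $s_0 \in K$, as $s_0 \in K$ iff $m < $ the relevant count — more precisely $s_0\in K$ precisely when the mixed cycle is nontrivial), part (1) reduces to citing \cite{Howlett}: $K \sim L$ iff the two compositions yield the same partition.

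For part (2) I would compute $N_W(W_K)/W_K$ directly. Write $W_K \cong W_{K_0} \times W_{K_1}$ reflecting the decomposition $c_K = x_0 x_1$, where $W_{K_1}$ is a product of symmetric-group factors $S_{i}$ (one for each positive cycle of length $i$, there being $a_i$ of them) and $W_{K_0}$ is the ``$B$-type'' factor acting on the points moved by the mixed cycle. The normaliser permutes the direct factors of isomorphic type — contributing the factor $\prod_i a_i!$ from permuting the $a_i$ blocks of size $i$ among themselves — and in addition, for each of the $t$ positive-cycle factors, there is a ``sign'' or ``flip'' element of $B_n$ normalising that $S_i$-factor but not lying in it (the element swapping the positive cycle with its negative partner), contributing a factor of $2$ each, i.e. $2^t$ in total. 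One must check these two sources of normalising elements generate $N_W(W_K)$ modulo $W_K$ and that they are independent, giving $[N_W(W_K):W_K] = 2^t a_1! a_2! \cdots a_m!$; the $W_{K_0}$-factor, being self-normalising in its own $B$-component up to the symmetries already counted, contributes nothing extra. This is again precisely the normaliser formula extractable from \cite{Howlett}, so the honest approach is to identify the quoted formula with Howlett's description rather than to reprove it.

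The main obstacle is bookkeeping rather than mathematics: one must be careful about the role of $x_0$ and of the $n-m$ fixed points. A clean way to handle this is to observe that the $n-m$ fixed points contribute a parabolic direct factor that is itself of type $B_{n-m}$ intersected trivially with $W_K$, and that whether it is ``absorbed'' into $W_{K_0}$ or left aside does not affect the index $[N_W(W_K):W_K]$ — the formula $2^t a_1!\cdots a_m!$ depends only on the composition of $m$, which is consistent with part (1) saying conjugacy depends only on the partition of $m$. So the cleanest exposition simply translates the composition parameterisation into the language of \cite{Howlett} and reads off both statements, checking in particular that the exponent $t$ in $2^t$ counts exactly the parts of the composition (equivalently, the number of positive cycles of $c_K$ of length $\ge 1$ that are ``genuine'', i.e. not the mixed cycle $x_0$), which matches the ``$t$ components in all'' in the statement.
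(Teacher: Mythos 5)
Your overall route coincides with the paper's: the paper offers no proof of this lemma at all, stating only that it is ``a consequence of the results of \cite{Howlett}'', and your proposal likewise reduces both parts to Howlett's classification of conjugacy classes and normalizers of parabolic subgroups, adding a sketch of the direct normaliser computation (the $\prod_i a_i!$ from permuting isomorphic type-$A$ factors and the $2^t$ from the sign-flip elements $\prod_{j\in P}(j,-j)$ on each block $P$). That sketch is sound, and part (1) is correctly handled.

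There is, however, one bookkeeping error that you must resolve the right way or the formula in part (2) comes out wrong. Under the paper's parameterisation the $n-m$ points \emph{not} accounted for by the composition of $m$ are precisely the points moved by the mixed cycle $x_0$; they carry a direct factor of $W_K$ of type $B_{n-m}$ (present exactly when $s_0\in K$), which is why they contribute nothing to the index. Genuine fixed points of $c_K$, by contrast, must be recorded as parts of size $1$ of the composition of $m$, hence counted in $t$ and in $a_1$. Your proposal conflates these: you describe the $n-m$ leftover points as ``fixed'' and later as ``a parabolic direct factor of type $B_{n-m}$ intersected trivially with $W_K$'', and you leave open whether the length-one cycles are ``appended, or kept separate''. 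If one keeps them separate (not counted in $t$), the formula fails already for $K=\emptyset$, where the true index is $|B_n|=2^n n!$, matching $t=n$, $a_1=n$, and not $t=0$. So the parts of size $1$ must be appended, and the $B_{n-m}$ factor is a direct factor \emph{of} $W_K$, not a complement to it; with that correction your argument goes through and agrees with what one reads off from \cite{Howlett}.
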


\begin{corollary}\label{rsBn}
\begin{enumerate}
\item $r=\sum_{m=0}^{n}\pi(m)$
\item If $p\not = 2$ then $s=\sum_{m=0}^{n}\pi(m,p)$
\end{enumerate}
\end{corollary}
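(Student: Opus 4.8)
The plan is to read off both equalities from the parameterisation of the subsets of $S$ given in Lemma~\ref{B-normaliser}, combined with the descriptions of $r$ and $s$ furnished by Lemma~\ref{rank}. Recall that $r$ is the number of rows of $M^c(W)$, hence the number $|E|$ of conjugacy classes of subsets of $S$ (for $W=B_n$), while $s$ counts those classes $K\in E$ for which $p\nmid[N(W_K):W_K]$.

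For part~1, I would observe that by Lemma~\ref{B-normaliser}(1) each subset $K$ of $S$ is labelled by a composition of some integer $m$ with $0\le m\le n$, and that two subsets are conjugate exactly when their compositions determine the same partition of $m$. Hence $E$ is in bijection with the set of all partitions of all the integers $0,1,\dots,n$, so $r=|E|=\sum_{m=0}^n\pi(m)$.

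For part~2, fix $K\in E$ whose composition is a composition of $m$ having $a_i$ parts equal to $i$ and $t=\sum_i a_i$ parts altogether. By Lemma~\ref{B-normaliser}(2) we have $[N(W_K):W_K]=2^t a_1!a_2!\cdots a_m!$. When $p\neq2$ the factor $2^t$ is coprime to $p$, so $p\nmid[N(W_K):W_K]$ if and only if $p\nmid a_i!$ for every $i$; and since $p$ is prime, $p\mid a_i!$ precisely when $a_i\ge p$. Therefore $p\nmid[N(W_K):W_K]$ is equivalent to $a_i<p$ for all $i$, i.e.\ to the partition of $m$ having no part of multiplicity $p$ or greater; by definition these are the partitions counted by $\pi(m,p)$. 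Summing over $0\le m\le n$ yields $s=\sum_{m=0}^n\pi(m,p)$.

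The argument is essentially bookkeeping with the two lemmas, so I do not expect a serious obstacle. The only points needing a little care are the elementary fact that for a prime $p$ one has $p\mid a!$ iff $a\ge p$, and the recognition that it is precisely the factor $2^t$ that forces the restriction $p\neq2$: when $p=2$, only the composition of $m=0$ (which corresponds to $K=S$, where $[N(W_K):W_K]=1$) survives, so $s=1$ and the formula of part~2 would fail.
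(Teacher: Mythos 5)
Your proposal is correct and follows exactly the route the paper intends: the corollary is stated without explicit proof as an immediate consequence of Lemma~\ref{B-normaliser}, with $r=|E|$ counted via the bijection between conjugacy classes of subsets of $S$ and partitions of $m$ for $0\leq m\leq n$, and $s$ counted by noting that for $p\neq 2$ the condition $p\nmid 2^{t}a_{1}!\cdots a_{m}!$ amounts to every multiplicity $a_{i}$ being less than $p$, which is the definition of the partitions enumerated by $\pi(m,p)$. Your closing observation about why $p=2$ must be excluded (only $K=S$ survives) is also consistent with the paper's later treatment in Theorem~\ref{decompB}.
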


Let $K$ be one of the subsets indexing the rows and columns of $M^c(W)$
and $c_{K}=x_{0}x_{1}$ as in Equation \ref{x0x1}.  If $x_{1}$ is a $p$-regular
element we say that $K$ is a {\em $p$-special} subset of $S$.  Since the
order of $x_{1}$ is the lowest common multiple of its cycle lengths,
$K$ is $p$-special if and only if the partition corresponding to $K$ has
no part divisible by $p$.  By Lemma \ref{partition-equality} and Corollary
\ref{rsBn}, there are precisely $s$ $p$-special subsets when $p\neq 2$.

\begin{lemma}
\label{special}If $K\subseteq S$ there exists a $p$-special $K_{1}\subseteq
S$ such that $c_{K}$ and $c_{K_{1}}$ have conjugate $p$-regular parts.
\end{lemma}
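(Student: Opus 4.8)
The plan is to construct $K_1$ explicitly from $K$ by modifying the partition underlying $K$. Recall from the discussion preceding the lemma that $c_K = x_0 x_1$, where $x_1$ is the product of the matching pairs of positive and negative cycles and $x_0$ is the single mixed cycle (possibly trivial). The $p$-regular part of $c_K$ is obtained by taking the $p$-regular part of each cycle, and for a cycle of length $\ell$ this $p$-regular part is a product of $\ell / \ell_p$ cycles each of length $\ell_{p'}$, where $\ell = \ell_p \ell_{p'}$ is the decomposition into $p$-part and $p'$-part. So the $p$-regular part of $c_K$ is determined (up to conjugacy in $B_n$) by: the multiset of positive-cycle lengths of $x_1$, each replaced by $\ell_p$ copies of $\ell_{p'}$; together with the analogous data for the mixed cycle $x_0$.

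First I would handle $x_1$. If the partition $\mu$ corresponding to $K$ (the partition of positive-cycle lengths of $x_1$, together with the length of $x_0$) has a part $\ell$ divisible by $p$, replace that part by $\ell_p$ copies of the part $\ell_{p'}$; iterating, I obtain a partition $\mu_1$ with no part divisible by $p$, i.e.\ a $p$-special subset. The point is that this replacement does not change the $B_n$-conjugacy class of the $p$-regular part of the corresponding $x_1$: a positive cycle of length $\ell$ and its negative partner contribute, after taking $p$-regular parts, exactly $2\ell_p$ cycles of length $\ell_{p'}$, which is the same as what $\ell_p$ positive cycles of length $\ell_{p'}$ and their partners contribute. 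Second, I must check that the mixed cycle $x_0$ behaves correctly: the $p$-regular part of a mixed cycle of length $2m$ (which acts on $2m$ of the signed points, swapping signs) is again built from its $p'$-part in the appropriate signed sense, and the same splitting argument applies, possibly converting part of the "mixed" contribution into ordinary positive/negative pairs when $p \mid m$. I would verify that after this adjustment the resulting subset $K_1$ is still a genuine subset of $S$ of the required parameterised form, and that $c_K$ and $c_{K_1}$ have $B_n$-conjugate $p$-regular parts by construction.

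The main obstacle I anticipate is bookkeeping around the mixed cycle $x_0$: one must be careful that the "signed" cycle structure is preserved under passing to $p$-regular parts, and that the case $p = 2$ (where $s_0 \in K$ interacts with the $2^t$ factor in the normaliser index) does not cause the construction to fail or force $K_1$ outside the allowed family. Since the lemma as stated does not restrict $p$, I would check separately that the argument still produces a valid $p$-special $K_1$ when $p=2$, or note that $p$-special is defined purely in terms of the partition having no part divisible by $p$, so the construction of $\mu_1$ goes through verbatim and only the identification of conjugacy classes of $p$-regular parts needs the signed-cycle check. Everything else is the routine cycle-arithmetic of $p$-regular parts.
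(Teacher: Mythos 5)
Your treatment of $x_1$ is essentially the paper's argument: replace each cycle of $x_1$ by the cycles of its $p$-regular part (a part $\ell$ becomes $\ell_p$ parts equal to $\ell_{p'}$ --- note the slip in your first paragraph, where you wrote $\ell/\ell_p$ cycles of length $\ell_{p'}$ rather than $\ell/\ell_{p'}=\ell_p$ of them), observe that the resulting cycles still come in matching positive/negative pairs because the $p$-regular part of $x_1$ is a power of $x_1$, and conjugate by an element centralising $x_0$ to realise it as the $x_1$-part of a Coxeter element $c_{K_1}$. That much is correct and is in fact the whole proof.

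The gap is your ``second'' step concerning $x_0$. You have misread the parameterisation: the composition (hence partition) attached to $K$ records only the lengths of the \emph{positive} cycles of $x_1$; the length of the mixed cycle $x_0$ is not one of its parts, and $p$-speciality requires only that $x_1$ --- not $c_K$, and not $x_0$ --- be $p$-regular. So $x_0$ must simply be left alone: setting $c_{K_1}=x_0x_3$, where $x_3$ is conjugate to the $p$-regular part $x_2$ of $x_1$ by an element centralising $x_0$, the subset $K_1$ is already $p$-special, and since $x_0$ commutes with $x_1$ and with $x_3$ the $p$-regular part of $c_{K_1}$ is ($p$-regular part of $x_0$)$\cdot x_3$, conjugate to ($p$-regular part of $x_0$)$\cdot x_2$, which is the $p$-regular part of $c_K$. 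The ``main obstacle'' you anticipate --- re-realising the $p$-regular part of the mixed cycle inside the parameterised family, and the $p=2$ interaction with $s_0$ --- is therefore a non-issue, but your proposal neither recognises this nor carries out the verification it promises, so as written the proof is incomplete precisely at the point where you depart from what the lemma actually requires.
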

\begin{proof}
Let $c_{K}=x_{0}x_{1}$ as in Equation \ref{x0x1} and
let $x_{2}$ be the $p$-regular part of $x_{1}$.  Since $x_{2}$ is a
power of $x_{1}$, its cycles also come in matching positive, negative
pairs.  Therefore $x_{2}$ is conjugate, via a permutation in the
centraliser of
$x_{0}$, to a Coxeter element $x_{3}$ with this property.
But then $x_{0}x_{3}$ is also a
Coxeter element $c_{K_{1}}$ whose $p$-regular part is conjugate to that
of $x_{0}x_{1}$.
\end{proof}

\begin{lemma}
\label{indexing} If $p\neq 2$ the columns of $M^c(W)$ which are indexed by
the $p$-special
subsets provide a full set of irreducible representations of
$\Sigma(B_{n},p)$.
\end{lemma}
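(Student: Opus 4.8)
The plan is to combine a counting argument with Proposition~\ref{preg} and Lemma~\ref{special}, so that the converse of Proposition~\ref{preg} --- which is not available here --- is never actually needed. By Lemma~\ref{columns}, $M^c(W)$ modulo $p$ has exactly $s$ distinct columns, and these columns constitute a full set of irreducible representations of $\Sigma(B_n,p)$. It therefore suffices to prove two things: first, that exactly $s$ of the subsets in $E$ are $p$-special; and second, that distinct $p$-special subsets index columns of $M^c(W)$ that are distinct modulo $p$.

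The first point has in effect already been noted: since conjugate subsets of $S$ determine the same partition (Lemma~\ref{B-normaliser}(1)), $E$ is parameterised by the partitions of the integers $m$ with $0 \le m \le n$, and $K \in E$ is $p$-special exactly when its partition has no part divisible by $p$. By Lemma~\ref{partition-equality} the number of such partitions of $m$ is $\pi(m,p)$, and summing over $m$ and using Corollary~\ref{rsBn}(2) gives the total $s$.

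For the second point I would argue by surjectivity and pigeonhole. By Lemma~\ref{special}, every $K \subseteq S$ admits a $p$-special $K_1 \subseteq S$ for which $c_K$ and $c_{K_1}$ have conjugate $p$-regular parts, and then Proposition~\ref{preg} forces columns $K$ and $K_1$ of $M^c(W)$ to be equal modulo $p$. Hence every one of the $s$ distinct columns of $M^c(W)$ modulo $p$ already occurs among those indexed by the $p$-special subsets. Since there are exactly $s$ $p$-special subsets, the map sending a $p$-special subset to its column modulo $p$ is a surjection from an $s$-element set onto an $s$-element set, hence a bijection; in particular the $p$-special columns are pairwise incongruent modulo $p$. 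They therefore form a full, irredundant set of irreducible representations of $\Sigma(B_n,p)$.

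The only point requiring care --- and the real content of the argument --- is that Proposition~\ref{preg} provides just one implication: conjugacy of $p$-regular parts implies equality of columns modulo $p$, but not conversely. One should therefore not try to establish distinctness directly by showing that non-conjugate $p$-special Coxeter elements give different columns; the counting/pigeonhole device avoids this altogether, and I expect this to be the main obstacle, resolved once Lemma~\ref{special} and the count of $p$-special subsets are in place. Note finally that the hypothesis $p \ne 2$ enters only through Corollary~\ref{rsBn}(2) and the resulting count; when $p = 2$ both the value of $s$ and the conclusion change.
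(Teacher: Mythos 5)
Your proposal is correct and follows essentially the same route as the paper: use Lemma~\ref{special} together with Proposition~\ref{preg} to show that the $p$-special columns exhaust all $s$ distinct columns of $M^c(W)$ modulo $p$, then invoke the count of $p$-special subsets (via Lemma~\ref{partition-equality} and Corollary~\ref{rsBn}) and the pigeonhole principle to conclude they are pairwise distinct. The paper's proof is just a terser version of exactly this argument.
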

\begin{proof}
By the last lemma the columns of $M^c(W) \mod p$ indexed by $p$-special subsets
contain a full set of distinct columns and since there are $s$ such
columns they must yield a complete set of irreducible representations of
$\Sigma(B_{n},p)$.
\end{proof}

\begin{theorem}
\label{decompB}Let $W$ be one of the Coxeter groups $B_{n}$ and let
$K\in E,L\in F$. If $p\neq
2$ then $d_{KL}=1$ if and only if $c_{K}$ and $c_{L}$ have conjugate
$p$-regular parts.  If $p=2$ then $F=\{S\}$ and $d_{KS}=1$ for all $K$.
\end{theorem}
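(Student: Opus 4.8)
The plan is to treat $p$ odd and $p=2$ separately, since the collapse to $F=\{S\}$ is a phenomenon of characteristic $2$ alone. For $p$ odd the argument should mirror the proof of Theorem~\ref{decompA}. I would introduce two equivalence relations on the set $E$ indexing the columns of $M^c(W)$: put $(K,J)\in\rho_1$ when $\lambda_K\equiv\lambda_J\pmod p$, and $(K,J)\in\rho_2$ when $c_K$ and $c_J$ have conjugate $p$-regular parts. Proposition~\ref{preg} gives $\rho_2\subseteq\rho_1$, so $\rho_2$ refines $\rho_1$ and therefore has at least as many equivalence classes, and by Lemma~\ref{columns} the number of $\rho_1$-classes is exactly $s$. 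For the reverse inequality I invoke the $B_n$-specific Lemma~\ref{special}: every $\rho_2$-class contains a $p$-special subset, and (as noted just after Lemma~\ref{special}, via Lemma~\ref{partition-equality} and Corollary~\ref{rsBn}) there are precisely $s$ of those. Hence $\rho_2$ has at most $s$ classes, so $\rho_1$ and $\rho_2$ have the same finite number of classes and, being nested, coincide. Finally, for $K\in E$ and $L\in F$ the definition of $D$ gives $d_{KL}=1$ iff columns $K$ and $L$ of $M^c(W)$ agree modulo $p$ iff $\lambda_K\equiv\lambda_L\pmod p$, i.e. iff $(K,L)\in\rho_1=\rho_2$, i.e. iff $c_K$ and $c_L$ have conjugate $p$-regular parts; this is the assertion for $p\neq 2$.

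For $p=2$ the point is simply to identify $s$, which Corollary~\ref{rsBn} does not cover. By Lemma~\ref{B-normaliser}(2) we have $[N(W_K):W_K]=2^t a_1!\cdots a_m!$, where $t$ is the number of parts of the composition attached to $K$, and this is odd precisely when $t=0$, i.e. when the composition is empty, i.e. when $c_K$ is the full negative $n$-cycle, i.e. $K=S$. Thus $s=1$, and since $M^c(W)\bmod 2$ is lower triangular with its unique nonzero diagonal entry $\beta_{SS}=1$ in row $S$, the set $F$ of its nonzero rows is $\{S\}$. It remains to check that $d_{KS}=1$ for all $K$. By Lemma~\ref{rank}(2) the $2$-rank of $M^c(W)$ equals $s=1$, so the column space of $M^c(W)\bmod 2$ is one-dimensional; it is spanned by the column indexed by $S$, which is nonzero since its row-$S$ entry is $\beta_{SS}=1$, and every column indexed by some $K$ is likewise nonzero mod $2$ because its row-$S$ entry $\beta_{SK}$ counts the fixed points of $W_K$ on the one-point set $W/W_S$ and so equals $1$. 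Hence every column of $M^c(W)$ is congruent to column $S$ modulo $2$, which is exactly the statement $d_{KS}=1$ for all $K$.

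The only genuinely delicate step, I expect, is the counting argument for $p$ odd: one must be sure that Lemma~\ref{special} really does supply a surjection from the $s$-element set of $p$-special subsets onto the set of $\rho_2$-classes, so that the upper bound ``at most $s$ classes'' pincers with the lower bound ``at least $s$ classes'' coming from $\rho_2\subseteq\rho_1$. Everything else is either quoted (the normalizer-index formula of Lemma~\ref{B-normaliser}, Proposition~\ref{preg}, and Lemmas~\ref{columns} and~\ref{rank}) or a direct unwinding of how $D$ was built from the columns of $M^c(W)$.
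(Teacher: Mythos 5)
Your proof is correct and follows essentially the same route as the paper: Proposition~\ref{preg} for one implication, Lemma~\ref{special} together with the count of $p$-special subsets (via Lemma~\ref{partition-equality} and Corollary~\ref{rsBn}) for the converse, and Lemma~\ref{B-normaliser} to collapse everything onto $K=S$ when $p=2$. The only cosmetic difference is that you organize the odd-$p$ converse as an equivalence-class count in the style of Theorem~\ref{decompA}, whereas the paper packages the same counting into Lemma~\ref{indexing} (distinctness of the columns indexed by $p$-special subsets) and applies that directly.
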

\begin{proof}
Suppose first that $p\neq 2$.
Proposition   \ref{preg} has proved one implication already.  For the other,
suppose $d_{KL}=1$ and let $K_{1},L_{1}$ be the $p$-special subsets,
guaranteed by
Lemma \ref{special}, such that $K,K_{1}$ have conjugate $p$-regular
parts and $L,L_{1}$ have conjugate $p$-regular parts.  Then, by
Proposition \ref{preg},  $d_{K_{1}L_{1}}=1$ and Lemma \ref{indexing} shows
that $K_{1}=L_{1}$.

If $p=2$, Lemma \ref{B-normaliser}
implies that the only $K\in E$ for which $2$ does not divide
$[N(W_{K}):W_{K}]$ is the
one with $t=0$, namely $K=S$.  Therefore $\Sigma(B_{n},2)$ has just
one irreducible representation and so $d_{KL}=1$ for all $K\in E$,
$L\in F=\{S\}$.
\end{proof}

\subsection{Representation Theory of $\Sigma(D_n, p)$.}

The Coxeter group $(W, S)$ of type $D_n$ can be considered as a normal
subgroup of index  $2$ in  the Coxeter  group $(\hat{W}, \hat{S})$  of
type $B_n$.  As such  its set of Coxeter generators  is $S = \{u, s_1,
\dots, s_{n-1}\}$  where,  as  in the previous subsection,
$\hat{S}  =  \{s_0, s_1,
\dots, s_{n-1}\}$ and $u = s_0 s_1 s_0  = (-1, 1)(1, 2)(-1, -2)(-1, 1)
= (-1, 2)(1, -2)$.

For any $K \subseteq S$ the parabolic subgroup  $W_K$ is isomorphic to
$W_0 \times W_1$ where $W_0$  is of type  $D_{n_0}$ for some $n_0 \leq
n$,  $n_0 \not=1$  and $W_1  =  \Group{K_1}$  for some $K_1  \subseteq
\{s_{n_0},  \dots,   s_{n-1}\}$.  (Here the group    of  type $D_2$ is
$\Group{u, s_1}$ and  isomorphic to a group of  type  $A_1 \times A_1$
and the group of type $D_3$ is $\Group{u, s_1, s_2}$ and isomorphic to
a group of type $A_3$.)   If  $n_0 = 0$   then $W_1$ is a subgroup  of
either the group $W'$ generated by $S' = \{s_1, s_2, \dots, s_{n-1}\}$
or   the  group $W''$  generated  by   $S'' = \{u,    s_2, s_3, \dots,
s_{n-1}\}$ which are both of type $A_{n-1}$.

Thus to each subset  $K \subseteq S$  there is associated via $W_1$  a
composition of $m  \leq n$.  Each  composition occurs this way, except
those   of $n-1$.  Conversely, for  each  composition  $\lambda$ of $m
\not= n-1$, there is a unique  $K \subseteq S$,  unless $\lambda$ is a
composition of $n$ with $\lambda_1  > 1$.  In that  case there are two
subsets with that label, each containing exactly one of $s_1$ and $u$.

Consider $K, L \subseteq S$.  Then $K$ and $L$ are conjugate in $W$ if
and only if  their   corresponding  compositions determine  the   same
partition, unless that partition is a partition of  $n$ with all parts
even.  In that case $K$ and $L$ are conjugate only if they both lie in
$S'$ or both in $S''$.

Consider $W_K = W_0 \times W_1$ with $W_0$  of type $D_{n_0}$ for some
$n_0 \geq  2$.    Then there is  a  parabolic   subgroup $\hat{W}_K  =
\hat{W}_0  \times W_1$  of   $\hat{W}$ where  $\hat{W}_0$ is of   type
$B_{n_0}$.  We have $W_K = \hat{W}_K \cap W$ and $[\hat{W}_K:W_K]
=   2$.   Also  $[N_{\hat{W}}(\hat{W}_K):N_W(W_K)] =   2$  whence
$\beta_{KK}$ is computed  from the partition  corresponding to  $K$ in
the same way as in case $B_n$.

Now  let   $n_0 = 0$   and  let  $W_K$ be   a  subgroup of  $W'$  with
corresponding partition $\mu$.  Then $W_K$ is  a parabolic subgroup of
both $W$ and $\hat{W}$.  We have $N_{\hat{W}}(W_K) \subseteq W$ if and
only if   all parts of  $\mu$  are  even.  We   thus get the following
formula.

\begin{lemma} \label{la:D}
  Let $K \subseteq S$  with  corresponding partition $\mu =  (1^{m_1},
  2^{m_2}, \dots, n^{m_n})$.  Then
  \[
    [N_W(W_K) : W_K] = 2^{m_1}m_1!\, \dotsm \, 2^{m_n}m_n!\, a
  \]
  where $a  = 1$ unless $\mu$  is a partition of  $n$ and has at least
  one odd part.  In that case $a = 1/2$.
\end{lemma}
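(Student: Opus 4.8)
The plan is to reduce everything to the type $B_n$ computation of Lemma~\ref{B-normaliser}, carried out inside the overgroup $\hat W$, by tracking a single index of $2$ in each of the two cases already separated in the discussion above: the case $n_0\ge 2$ (equivalently, since $W_1$ lives on at most $n-n_0$ coordinates, the case $\Size{\mu}<n$) and the case $n_0=0$ (equivalently, $\mu$ a partition of $n$).

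First I would dispose of the case $n_0\ge 2$. Here I claim $\hat W_K=\hat W_0\times W_1$ is the parabolic closure of $W_K$ in $\hat W$; it suffices to check that $\hat W_0$, of type $B_{n_0}$, is the parabolic closure of $W_0$, of type $D_{n_0}$, in the ambient type-$B_{n_0}$ parabolic, and this holds because for $n_0\ge 2$ the order $2^{n_0-1}n_0!$ of $D_{n_0}$ strictly exceeds the order of every maximal parabolic subgroup of $B_{n_0}$ (the exceptional isomorphisms $D_2\cong A_1\times A_1$ and $D_3\cong A_3$ being checked directly). Consequently $N_W(W_K)=N_{\hat W}(W_K)\cap W$ normalises $\hat W_K$, and from the chains $W_K\le\hat W_K\le N_{\hat W}(\hat W_K)$ and $W_K\le N_W(W_K)\le N_{\hat W}(\hat W_K)$, together with the two facts $[\hat W_K:W_K]=2$ and $[N_{\hat W}(\hat W_K):N_W(W_K)]=2$ recorded above, multiplicativity of indices gives $[N_W(W_K):W_K]=[N_{\hat W}(\hat W_K):\hat W_K]$. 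By Lemma~\ref{B-normaliser}(2) applied in $\hat W$ the latter equals $2^{t}\prod_i m_i!$ with $t=\sum_i m_i$ the number of parts of $\mu$, i.e.\ $\prod_i 2^{m_i}m_i!$; and since $\Size{\mu}<n$ here, $\mu$ is not a partition of $n$, so $a=1$.

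In the case $n_0=0$ the subgroup $W_K$ is of type $A$ and is itself a parabolic subgroup of $\hat W$ — conjugate in $\hat W$ (via $s_0$, if necessary, to pass from $W''$ to $W'$) to the standard parabolic of $B_n$ obtained by deleting $s_0$ together with a suitable further set of generators, which in the $B_n$ parameterisation corresponds to the composition $\mu$ of $n$. Lemma~\ref{B-normaliser}(2) in $\hat W$ then gives $[N_{\hat W}(W_K):W_K]=2^{t}\prod_i m_i!=\prod_i 2^{m_i}m_i!$. Since $N_W(W_K)=N_{\hat W}(W_K)\cap W$ and $[\hat W:W]=2$, the index $[N_{\hat W}(W_K):N_W(W_K)]$ is $1$ if $N_{\hat W}(W_K)\subseteq W$ and $2$ otherwise; by the fact recorded above it is $1$ exactly when all parts of $\mu$ are even. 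Dividing accordingly yields $[N_W(W_K):W_K]=\prod_i 2^{m_i}m_i!$ when all parts are even ($a=1$) and $\tfrac{1}{2}\prod_i 2^{m_i}m_i!$ when some part is odd ($a=1/2$). The two subsets sharing a label of the exceptional kind are interchanged by conjugation by $s_0$, an automorphism of $\hat W$ fixing $W$, so they give the same index and the statement is unambiguous.

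The main obstacle I anticipate is the bookkeeping of the previous paragraph: verifying that $W_K$ (resp.\ $\hat W_K$) really is a parabolic subgroup of $\hat W$ and pinning down precisely which $B_n$-composition it carries, so that Lemma~\ref{B-normaliser} can be quoted. Once that identification and the parabolic-closure claim for $D_{n_0}\le B_{n_0}$ are in hand, the entire proof is the index-$2$ arithmetic above.
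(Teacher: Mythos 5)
Your argument is correct and is essentially the paper's own: the lemma there is obtained from exactly this two-case reduction ($n_0\ge 2$ versus $n_0=0$) to Lemma~\ref{B-normaliser} via the embedding $W\le\hat{W}$, using the same facts $[\hat{W}_K:W_K]=2$, $[N_{\hat{W}}(\hat{W}_K):N_W(W_K)]=2$ and the criterion $N_{\hat{W}}(W_K)\subseteq W$ iff all parts of $\mu$ are even. Your write-up just supplies more detail (the parabolic-closure identification of $\hat{W}_K$ and the explicit index arithmetic) than the paper's terse discussion.
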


Let $c_K$ be  a Coxeter element  of $W_K$.   Again,  we have  a unique
decomposition $c_K = x_0 x_1$ where $x_0  \in W_0$ and  $x_1 \in W_1$.
We call $K$ a $p$-special subset if $x_1$ is $p$-regular.  And, by the
same argument as for type $B_n$, we have that for each $K \subseteq S$
there is a $p$-special $K_{1} \subseteq S$ such  that $c_K$ and
$c_{K_1}$ have
conjugate $p$-regular parts.

Similar considerations as  for type $B_n$ then  lead  to the following
description of the decomposition matrix for type $D_n$.

\begin{theorem}\label{decompD}
  Let $(W, S)$ be of type $D_n$ and  let $K \in E$, $L  \in F$.  If $p
  \not=2$ then  $d_{KL}  = 1$ if  and  only if   $c_K$ and  $c_L$ have
  conjugate $p$-regular parts.  If $p= 2$ and $n$ is even then we have
  $F = \{S\}$ and $d_{KS} = 1$ for all $K \in E$; if $n$ is odd then we
  have $F = \{S', S\}$ and $d_{KL} = 1$ if and only if  either $L = S$
  and $K \not= S'$ or $L = S'$ and $K = S'$.
\end{theorem}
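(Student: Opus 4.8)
The plan is to mirror, as closely as possible, the structure of the proofs of Theorems~\ref{decompA} and~\ref{decompB}, but with the extra bookkeeping forced by the index-$2$ phenomena recorded in Lemma~\ref{la:D} and the conjugacy description preceding it. The $p\neq 2$ case is the easier one: Proposition~\ref{preg} already gives one implication, and for the converse one argues exactly as in the type $B_n$ proof. Given $K,L\in E$ with $d_{KL}=1$, invoke the type-$D_n$ analogue of Lemma~\ref{special} (stated in the paragraph before Theorem~\ref{decompD}) to produce $p$-special subsets $K_1,L_1$ with $c_K,c_{K_1}$ and $c_L,c_{L_1}$ having conjugate $p$-regular parts. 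Then $d_{K_1L_1}=1$ by Proposition~\ref{preg} together with transitivity of column-equality mod $p$, and the $p$-special analogue of Lemma~\ref{indexing} --- namely that the $p$-special columns of $M^c(W)\bmod p$ are $s$ distinct columns giving all irreducibles of $\Sigma(D_n,p)$ --- forces $K_1=L_1$, hence $c_K$ and $c_L$ have conjugate $p$-regular parts. The one point needing care here is that in type $D_n$ a partition of $n$ with all parts even (equivalently, when $p\neq2$, any partition of $n$) can split into two subsets lying in $S'$ versus $S''$, so ``$p$-special subset'' must be understood to include this splitting, and one must check that the count of $p$-special subsets really is $s$; this follows from Lemma~\ref{la:D} (the $a=1/2$ correction never makes $[N_W(W_K):W_K]$ divisible by an odd $p$ that it wasn't already) exactly as in the remark preceding the theorem.

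The $p=2$ case is where the real work is, and it is genuinely different from type $B_n$ because Lemma~\ref{la:D} shows that whether $2\mid[N_W(W_K):W_K]$ is governed not only by the multiplicities $m_i$ but also by the factor $a$. First I would enumerate the subsets $K\in E$ with $2\nmid[N_W(W_K):W_K]$: from Lemma~\ref{la:D} this requires every $m_i\le1$ (so the partition $\mu$ is strict) and moreover requires the $2^{m_i}$ factors to contribute no $2$, i.e. $m_i=0$ for all $i$ \emph{except} that the correction $a=1/2$ can absorb a single factor of $2$. Working this through, the partition must be a partition of $n$ itself (so that $a$ can be $1/2$) into distinct parts with exactly one of them odd; but a strict partition of $n$ with exactly one odd part and the rest even --- and with the $2^{m_i}$ product equal to $2$ --- forces all parts to be $1$ or the partition to be essentially trivial. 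A direct case analysis shows the surviving subsets are precisely $S$ (the partition $(n)$, which contributes $a=1/2$ exactly when $n$ is odd, and $2^{m_n}m_n!\cdot\frac12=1$) and, when $n$ is odd, the subset $S'$ of type $A_{n-1}$ (partition $(1^n)$ of $n$ with all parts odd, giving $2^n n!\cdot\frac12$ --- which is still divisible by $2$ unless... ) --- so I would re-examine this arithmetic carefully, since the claimed answer is $F=\{S\}$ for $n$ even and $F=\{S',S\}$ for $n$ odd, and getting exactly these two out of Lemma~\ref{la:D} is the crux.

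Once $F$ is pinned down, the decomposition-matrix entries for $p=2$ follow from Lemma~\ref{columns}(2) plus the conjugacy description. For $n$ even, $F=\{S\}$ means $\Sigma(D_n,2)$ has a unique irreducible, so trivially $d_{KS}=1$ for every $K\in E$. For $n$ odd, $F=\{S',S\}$, and I must show $d_{KL}=1$ iff ($L=S$ and $K\neq S'$) or ($L=S'$ and $K=S'$). By Lemma~\ref{columns}, $d_{KL}=1$ iff column $K$ equals column $L$ in $M^c(W)\bmod 2$; since $M^c(W)$ is lower triangular, column $S$ is the last column and the diagonal entry $\beta_{SS}=[N_W(W_S):W_S]=1$ is odd, while column $S'$ has its ``own'' diagonal entry but all entries strictly below the $S'$-row vanish. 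The claim that every column other than $S'$ coincides mod $2$ with column $S$ should come from the fact that mod $2$ the only nonzero rows are those indexed by $S$ and $S'$ (by Lemma~\ref{betaJK=aJKK} and the $p$-rank computation of Lemma~\ref{rank}), so each column is determined by its two entries $(\beta_{S',K},\beta_{S,K})$ mod $2$; one then checks $\beta_{S,K}$ is always odd (it equals $\chi_K(c_S)$, a permutation-character value one can compute) and $\beta_{S',K}$ is even unless $K=S'$, using that $c_{S'}$ is an $n$-cycle-type element of the $A_{n-1}$ subgroup and $n$ is odd. The main obstacle, as flagged, is the delicate $2$-adic analysis of Lemma~\ref{la:D} that produces $F$; everything after that is a short deduction from the lower-triangularity and the rank count already in hand.
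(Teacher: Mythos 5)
Your overall strategy matches the paper's: handle $p\neq 2$ by transporting the $B_n$ argument through the $p$-special machinery, and handle $p=2$ by locating the odd rows of $M^c(W)\bmod 2$ via Lemma~\ref{la:D} and then comparing columns. The $p\neq 2$ half is acceptable (the paper is equally terse there, saying it ``follows as in case $B_n$''). But the $p=2$ half has two genuine gaps. First, your derivation of $F$ goes wrong because you attach the partition $(1^n)$ to $S'$; in the parameterisation of this section $S'$ generates the type $A_{n-1}$ subgroup $W'$, whose Coxeter element is an $n$-cycle, so the corresponding partition is $(n)$, i.e.\ $m_n=1$ and all other $m_i=0$, giving $[N_W(W_{S'}):W_{S'}]=2^1\cdot 1!\cdot\frac12=1$ exactly when $n$ is odd (the factor $a=\frac12$ applies because $(n)$ has an odd part). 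With the correct labels the computation is immediate: oddness of $2^{\sum m_i}\prod m_i!\cdot a$ forces either all $m_i=0$ (so $K=S$) or $a=\frac12$ together with $\sum m_i=1$ (so $\mu=(n)$ with $n$ odd, i.e.\ $K=S'$). Your ``strict partition with exactly one odd part'' analysis, and the quantity $2^n n!\cdot\frac12$ that you yourself flag as problematic, are artifacts of this mislabelling.

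Second, and more seriously, the statement you defer to ``one then checks'' --- that $\beta_{S'K}$ is even for every $K\neq S'$ --- is the real content of the $p=2$, $n$ odd case, and it is not a routine character-value computation. The paper proves it by realising $W/W_{S'}$ as the $W$-action on the $2^{n-1}$ complementary pairs $\{P,\, I\setminus P\}$ and then exhibiting, for each $K$ with $W_K$ conjugate into $W'$ and $K\neq S'$, a fixed-point-free involution on $\Fix(c_K)$, namely symmetric difference with the support of a nontrivial cycle of $c_K$ (a proper nonempty subset of $I$ precisely because $c_K$ is not a single $n$-cycle; oddness of $n$ rules out $c_K$ interchanging $P$ and $I\setminus P$). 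Together with the fact that $\beta_{S'K}=0$ when $W_K$ is not conjugate into $W'$, this yields the parity claim. Without some such argument your treatment of this case is incomplete. There is also a small index slip: the entry in row $S$ and column $K$ is $\beta_{SK}=\chi_S(c_K)$, not $\chi_K(c_S)$, and it equals $1$ trivially because $W_S=W$.
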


\begin{proof}
  The theorem for $p \not=2$ follows as in case $B_n$.   For $p =2$ we
  show that either
  $\beta_{KL} = 0  \mod 2$ for all $K,  L \in E$ unless $K =
  S$, or $n$ is odd and $K=  L  = S'$.   Note that  by Lemma~\ref{la:D},
  $\beta_{KK}  = [N_W(W_K) :  W_K]$ is odd  only if all $m_i = 0$
  (whence $\mu$ is the  empty partition corresponding to  $K = S$) or,
  if $\mu$  is a partition  of $n$ with at  least one odd part  and at
  most  one $m_i =  1$ (whence $n$ is  odd and $\mu$  is the partition
  $[n]$ corresponding to $K = S'$).  Thus, for $K \not= S$, $\beta_{KL}$
  is even unless $n$ is odd and $K= S'$.

  Finally, in order to see that $\beta_{KL}$ is  even in the remaining
  cases (where $L = S'$ and $L$ not  conjugate to $K$) we consider the
  following action on complementary   pairs, first as a  $B_n$ action.
  Let $I = \{1, \dots, n\}$ and let
  \[
    X = \left\{\{P, Q\} \bigm| P, Q \subseteq I; P \cup Q = I;
      P \cap Q = \emptyset \right\}
  \]
  (so we always have $Q = I \setminus P$).   Then $B_n$ acts on $X$ as
  follows.   The  action of
  $s_i$ ($i \geq 1$) is  induced from its action as  $(i, i+1)$ on $I$
  and the action of $s_0$ is given by
  \[
    \{P, Q\}^{s_0} = \{P \perp \{1\},     Q \perp \{1\}\},
  \]
  where  $A \perp B = (A  \setminus B) \cup   (B \setminus A)$ denotes the
  symmetric difference of the  sets $A, B$.   Note that, if  we define
  $t_i = s_i \dotsm s_1  s_0 s_1 \dotsm   s_i$ then $t_i$ acts  as
  \[
    \{P, Q\}^{t_i} = \{P \perp \{i{+}1\}, Q \perp \{i{+}1\}\},
  \]
  the  symmetric difference with  $\{i{+}1\}$, and the longest element
  $w_0 =   t_0 t_1  \dotsm  t_{n-1}$ of  $\hat{W}$  acts as  symmetric
  difference with $I$   whence it  fixes   every point   in $X$.   The
  complementary  pair $\{P,   Q\}$ arises from   $\{\emptyset, I\}$ by
  taking symmetric differences with $P$  (or $Q$).  Thus the action of
  $\hat{W}$ is transitive on all of  the $2^{n-1}$ complementary pairs
  in $X$  and the stabiliser  of  $\{\emptyset, I\}$  is  $\Group{s_1,
    \dots, s_{n-1}, w_0}$, a group of index $2^{n-1}$ in $\hat{W}$.

  Now let $n$ be odd and restrict the action to $W$.  Then, since $w_0
  \not \in W$,  the stabiliser of $\{\emptyset,  I\}$ in  $W$ is $W'$,
  which is of index $2^{n-1}$ in  $W$.  Hence $W$ acts transitively on
  $X$ and the action   is equivalent to  the action  on the cosets  of
  $W'$, the one we are interested in.

  We know that $\beta_{KL} = 0$  whenever $W_L$ is  not conjugate to a
  subgroup of $W_K$.   It remains to investigate  the  fixed points of
  parabolic  subgroups of $W'$  which is of  type $A_{n-1}$.  Consider
  $s_1$ and its  fixed points.  If $n  > 2$ then  $\{P, Q\}$ is stable
  under $s_1$  if and only  if  $\{1, 2\} \subseteq   P$ or $\{1,  2\}
  \subseteq  Q$.  In   either case taking  symmetric  differences with
  $\{1, 2\}$ yields a different point $\{P', Q'\}$ which is also fixed
  by $s_1$.  So the fixed points of $s_1$ come in pairs.

  A  similar  argument applies  to  a Coxeter   element  $c_L$ of  any
  parabolic subgroup $W_L$ of $W'$ unless $L  =S'$.  Here we denote by
  $J \subseteq I$ the set of points moved  by $c_L$. Then we find that
  $\{P, Q\}$ is  stable under $c_L$ if and  only if $J \subseteq P$ or
  $J \subseteq Q$.   Again,   taking symmetric differences   with  $J$
  produces  a  different fixed point  $\{P',  Q'\}$.  This  shows that
  $\beta_{KL}$  is  even for all  proper  parabolic subgroups $W_L$ of
  $W'$.
\end{proof}

\subsection{Representation Theory of Exceptional Types}

The   descriptions   of the decomposition matrices      in the case of
the classical types in the previous subsections
are special cases  of a more general classification of
columns of the  parabolic table of  marks that are  equal if taken mod
$p$.

For  this more general classification we need to extend the notion of
having the same $p$-regular part.  Let $w'$ be the $p$-regular  part
of  $w \in  W$ and let
$\rightarrow_p$ be the relation on $E$  defined by $J \rightarrow_p K$
if $\left<w'\right>^c$ is  conjugate to $W_K$ for some  $w \in  $ such
that $\left<w\right>^c$ is conjugate to $W_J$.

\begin{theorem}
\label{decompGeneral}
  Let $K \in E$ and $L \in F$.  Then $d_{KL}  = 1$ if  and only if $K$
  and $L$  lie  in the  same  class of  the  equivalence generated  by
  $\rightarrow_p$.
\end{theorem}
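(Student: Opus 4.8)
The plan is to prove the equivalent statement that the relation $\rho_1$ on $E$ given by ``columns $K$ and $L$ of $M^c(W)$ agree mod $p$'' coincides with the equivalence relation $\approx$ generated by $\rightarrow_p$. Since $d_{KL}=1$ (for $K\in E$, $L\in F$) means exactly that columns $K$ and $L$ agree mod $p$, and $F$ is a system of representatives for $\rho_1$, the theorem is precisely this coincidence. I would prove the two inclusions $\approx\,\subseteq\,\rho_1$ and $\rho_1\,\subseteq\,\approx$ separately; the first gives the ``if'' direction, the second the ``only if'' direction.

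For $\approx\,\subseteq\,\rho_1$ it suffices to show that a single arrow $J\rightarrow_p K$ forces columns $J$ and $K$ to agree mod $p$. Take $w\in W$ with $\langle w\rangle^c$ conjugate to $W_J$ and $\langle w'\rangle^c$ conjugate to $W_K$, where $w'$ is the $p$-regular part of $w$, and write $w=w'w''$ with $w''$ the $p$-part. For each $I\in E$ the Corollary of Section~\ref{marks-section} gives $\chi_I(w)=\beta_{IJ}$ and $\chi_I(w')=\beta_{IK}$. Since $w''$ centralises $w'$, the $p$-group $\langle w''\rangle$ acts on $\Fix_{W/W_I}(w')$ with all orbits of $p$-power length and with fixed-point set exactly $\Fix_{W/W_I}(w)$; hence $\chi_I(w)\equiv\chi_I(w')\pmod p$, i.e. $\beta_{IJ}\equiv\beta_{IK}\pmod p$ for all $I$. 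Thus $J\,\rho_1\,K$, and transitivity gives $\approx\,\subseteq\,\rho_1$; in particular $K\approx L$ with $L\in F$ implies $d_{KL}=1$.

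For the reverse inclusion it is enough to show that $\approx$ has exactly $s$ classes, for then, being contained in $\rho_1$ which has $s$ classes (Lemma~\ref{columns}), it must equal $\rho_1$. Equivalently, every $\approx$-class must meet $F$. It meets $F$ in at most one point, by the previous paragraph together with the fact (Section~\ref{representation-section}) that distinct members of $F$ head distinct columns of $M^c(W)\bmod p$. For the existence of such a point, observe that $w=c_J$ yields an arrow $J\rightarrow_p J_0$ where $W_{J_0}$ is conjugate to $\langle c_J'\rangle^c$ and $c_J'$ is the $p$-regular part of a Coxeter element of $W_J$; hence every $\approx$-class contains an index $N$ that arises as the conjugacy class of the parabolic closure of some $p$-regular element. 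One must then connect $N$ by $\rightarrow_p$ to the unique $L\in F$ whose column equals that of $N$ mod $p$: if $p\nmid\beta_{NN}$ then $L=N$; otherwise one has to exhibit a $p$-element $v$ centralising a $p$-regular $u$ with $\langle u\rangle^c$ conjugate to $W_N$ such that $\langle uv\rangle^c$ is conjugate to $W_L$, which supplies the arrow $L\rightarrow_p N$. (For $G_2=I_2(6)$ at $p=3$ with $N$ empty, take $u=1$ and $v$ the square of a Coxeter element, which lies in no proper parabolic.)

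This existence step is the main obstacle; I do not expect a uniform argument, and would instead finish type by type, which suffices since both sides of the claimed equivalence are multiplicative over direct products of Coxeter systems. For $A_{n-1}$, $B_n$ and $D_n$ with $p$ odd one verifies the rigidity property that \emph{any} $w$ with $\langle w\rangle^c$ conjugate to $W_J$ has $\langle w'\rangle^c$ conjugate to $\langle c_J'\rangle^c$ — parabolic closures in the classical types are controlled by cycle data that transforms predictably under taking $p$-regular parts — so that $\rightarrow_p$ is an idempotent function and $\approx$ is the relation ``$c_K$ and $c_L$ have conjugate $p$-regular parts''; Theorems~\ref{decompA}, \ref{decompB} and~\ref{decompD} then finish these cases. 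The remaining cases, namely $p=2$ in types $B_n$ and $D_n$ and all of $F_4$, $H_3$, $H_4$, $E_6$, $E_7$, $E_8$, $G_2$ and the dihedral groups $I_2(m)$, I would settle by computing both the columns of $M^c(W)\bmod p$ and the relation $\rightarrow_p$ directly (as was done with GAP) and checking $\rho_1=\approx$ in each.
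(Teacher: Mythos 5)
Your proposal is correct in substance and, for the part that actually carries the weight, ends up in the same place as the paper: the paper's proof of Theorem~\ref{decompGeneral} consists of nothing more than the instruction to compare, type by type, the tables recording $\rightarrow_p$ with the parabolic tables of marks reduced mod $p$ (computed in CHEVIE/GAP), together with Theorems~\ref{decompA}, \ref{decompB} and \ref{decompD} for the classical types and the reference to \cite{Steph-thesis} for the dihedral ones. Where you genuinely differ is in supplying a uniform proof of the ``if'' direction: your fixed-point argument (the $p$-part $w''$ acts on $\Fix_{W/W_I}(w')$ with $p$-power orbits and fixed-point set $\Fix_{W/W_I}(w)$, so $\beta_{IJ}\equiv\beta_{IK}\bmod p$ whenever $J\rightarrow_p K$) extends Proposition~\ref{preg} from Coxeter elements to arbitrary $w$, and it reduces the converse to a count of equivalence classes of $\approx$ rather than a two-sided check. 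That is a real gain in structure over the paper's pure inspection, and the class-counting reduction is logically sound. Two points in your outline are asserted rather than argued and deserve a sentence of care if written up: the ``rigidity'' claim for classical types at odd $p$ (that \emph{every} $w$ with $\langle w\rangle^c$ conjugate to $W_J$ has $\langle w'\rangle^c$ conjugate to $\langle c_J'\rangle^c$), which is what lets you quote Theorems~\ref{decompA}--\ref{decompD}; and the multiplicativity over direct products, since the equivalence generated by a componentwise relation need not be the product of the componentwise generated equivalences --- though both are at the level of routine verification that the paper itself suppresses, and neither invalidates the approach.
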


The  following tables, which we have computed using the CHEVIE
\cite{chevie}
package in GAP \cite{GAP},  describe  the  decomposition matrices for   the
exceptional types.  The proof of the  theorem follows by inspection of
these tables and the parabolic tables of marks reduced mod $p$,
together with theorems \ref{decompA}, \ref{decompB} and \ref{decompD}.
Note that the theorem
is also true for the dihedral types $I_{2}(m)$ (see
\cite{Steph-thesis} for a full account of the representation theory in
all characteristics in this case).

In each case we give for any $K \in E$ and  for any prime $p$ dividing
the order of $W$ the list of $L \in E$ such that  $K \rightarrow_p L$.
The first entry in each list is determined  by the $p$-regular part of
a  Coxeter element,  and   the   number in  parenthesis  denotes   the
representative  in  the  equivalence obtained as    the closure of the
relation $\rightarrow_p$  if  different from the  first   entry of the
list.  If the list for  $K$ consists of $K$ only  and this is also the
representative  we  just  have   a dot (.)     as entry.   Note   that
conversely, all the representatives have a dot entry.

For  each $K \in E$  we also list  its isomorphism type, possibly with
dashes ($'$ and  $''$) to distinguish isomorphic parabolic  subgroups,
and the index $\beta_{KK}$ of $W_{K}$ in its normalizer in $W$.

\begin{table}[htbp]
\begin{center}\begin{tabular}{rc|r|ccc}
&&$\beta_{KK}$&$p = 2$&$p = 3$&$p = 5$\\\hline
1&$1$&$51840$&.&.&.\\
2&$A_1$&$720$&1&.&.\\
3&$A_1{\times}A_1$&$48$&1&.&.\\
4&$A_2$&$72$&4 (1)&1&.\\
5&$A_1{\times}A_1{\times}A_1$&$12$&1&.&.\\
6&$A_2{\times}A_1$&$6$&4 (1)&2&.\\
7&$A_3$&$8$&1&.&.\\
8&$A_2{\times}A_1{\times}A_1$&$2$&4 (1)&3&.\\
9&$A_2{\times}A_2$&$12$&.&1&.\\
10&$A_3{\times}A_1$&$2$&1&.&.\\
11&$A_4$&$2$&.&.&1\\
12&$D_4$&$6$&4, 1 (1)&12 (1)&.\\
13&$A_2{\times}A_2{\times}A_1$&$2$&9&2&.\\
14&$A_4{\times}A_1$&$1$&11&.&2\\
15&$A_5$&$2$&9&5&.\\
16&$D_5$&$1$&1, 4&.&.\\
17&$E_6$&$1$&17, 9 (9)&12, 1 (1)&.\\
\end{tabular}\end{center}
\caption{Decomposition matrix for $E_6$.}
\end{table}

\begin{table}[htbp]
\begin{center}\begin{tabular}{rc|r|cccc}
&&$\beta_{KK}$&$p = 2$&$p = 3$&$p = 5$&$p = 7$\\\hline
1&$1$&$2903040$&.&.&.&.\\
2&$A_1$&$23040$&1&.&.&.\\
3&$A_1{\times}A_1$&$768$&1&.&.&.\\
4&$A_2$&$1440$&4 (1)&1&.&.\\
5&$(A_1{\times}A_1{\times}A_1)'$&$1152$&1&.&.&.\\
6&$(A_1{\times}A_1{\times}A_1)''$&$96$&1&.&.&.\\
7&$A_2{\times}A_1$&$48$&4 (1)&2&.&.\\
8&$A_3$&$96$&1&.&.&.\\
9&$A_1{\times}A_1{\times}A_1{\times}A_1$&$48$&1&.&.&.\\
10&$A_2{\times}A_1{\times}A_1$&$8$&4 (1)&3&.&.\\
11&$A_2{\times}A_2$&$24$&11 (1)&1&.&.\\
12&$(A_3{\times}A_1)'$&$48$&1&.&.&.\\
13&$(A_3{\times}A_1)''$&$8$&1&.&.&.\\
14&$A_4$&$12$&14 (1)&.&1&.\\
15&$D_4$&$48$&4, 1 (1)&15 (1)&.&.\\
16&$A_2{\times}A_1{\times}A_1{\times}A_1$&$12$&4 (1)&5&.&.\\
17&$A_2{\times}A_2{\times}A_1$&$4$&11 (1)&2&.&.\\
18&$A_3{\times}A_1{\times}A_1$&$4$&1&.&.&.\\
19&$A_3{\times}A_2$&$4$&4 (1)&8&.&.\\
20&$A_4{\times}A_1$&$2$&14 (1)&.&2&.\\
21&$D_4{\times}A_1$&$8$&4, 1 (1)&.&.&.\\
22&$A_5'$&$12$&11 (1)&5&.&.\\
23&$A_5''$&$4$&11 (1)&6&.&.\\
24&$D_5$&$4$&1, 4&.&.&.\\
25&$A_3{\times}A_2{\times}A_1$&$2$&4 (1)&12&.&.\\
26&$A_4{\times}A_2$&$2$&26 (1)&14&4&.\\
27&$A_5{\times}A_1$&$2$&11 (1)&9&.&.\\
28&$D_5{\times}A_1$&$2$&1, 4&.&.&.\\
29&$A_6$&$2$&29 (1)&.&.&1\\
30&$D_6$&$2$&14, 1, 4, 11 (1)&.&.&.\\
31&$E_6$&$2$&31, 11 (1)&15, 1 (1)&.&.\\
32&$E_7$&$1$&31, 1, 4, 11, 14, 26, 29 (1)&32, 5 (5)&.&.\\
\end{tabular}\end{center}
\caption{Decomposition matrix for $E_7$.}
\end{table}

\begin{table}[htbp]  
\begin{center}\begin{tabular}{rc|r|cccc}
&&$\beta_{KK}$&$p = 2$&$p = 3$&$p = 5$&$p = 7$\\\hline
1&$1$&$696729600$&.&.&.&.\\
2&$A_1$&$2903040$&1&.&.&.\\
3&$A_1{\times}A_1$&$46080$&1&.&.&.\\
4&$A_2$&$103680$&4 (1)&1&.&.\\
5&$A_1{\times}A_1{\times}A_1$&$2304$&1&.&.&.\\
6&$A_2{\times}A_1$&$1440$&4 (1)&2&.&.\\
7&$A_3$&$3840$&1&.&.&.\\
8&$A_1{\times}A_1{\times}A_1{\times}A_1$&$384$&1&.&.&.\\
9&$A_2{\times}A_1{\times}A_1$&$96$&4 (1)&3&.&.\\
10&$A_2{\times}A_2$&$288$&10 (1)&1&.&.\\
11&$A_3{\times}A_1$&$96$&1&.&.&.\\
12&$A_4$&$240$&12 (1)&.&1&.\\
13&$D_4$&$1152$&4, 1 (1)&13 (1)&.&.\\
14&$A_2{\times}A_1{\times}A_1{\times}A_1$&$24$&4 (1)&5&.&.\\
15&$A_2{\times}A_2{\times}A_1$&$24$&10 (1)&2&.&.\\
16&$A_3{\times}A_1{\times}A_1$&$16$&1&.&.&.\\
17&$A_3{\times}A_2$&$16$&4 (1)&7&.&.\\
18&$A_4{\times}A_1$&$12$&12 (1)&.&2&.\\
19&$D_4{\times}A_1$&$48$&4, 1 (1)&19 (2)&.&.\\
20&$A_5$&$24$&10 (1)&5&.&.\\
21&$D_5$&$48$&1, 4&.&.&.\\
22&$A_2{\times}A_2{\times}A_1{\times}A_1$&$8$&10 (1)&3&.&.\\
23&$A_3{\times}A_2{\times}A_1$&$4$&4 (1)&11&.&.\\
24&$A_4{\times}A_1{\times}A_1$&$4$&12 (1)&.&3&.\\
25&$A_3{\times}A_3$&$8$&1&.&.&.\\
26&$A_4{\times}A_2$&$4$&26 (1)&12&4&.\\
27&$D_4{\times}A_2$&$12$&10, 4 (1)&13 (1)&.&.\\
28&$A_5{\times}A_1$&$4$&10 (1)&8&.&.\\
29&$D_5{\times}A_1$&$4$&1, 4&.&.&.\\
30&$A_6$&$4$&30 (1)&.&.&1\\
31&$D_6$&$8$&12, 1, 4, 10 (1)&.&.&.\\
32&$E_6$&$12$&32, 10 (1)&13, 1 (1)&.&.\\
33&$A_4{\times}A_2{\times}A_1$&$2$&26 (1)&18&6&.\\
34&$A_4{\times}A_3$&$2$&12 (1)&.&7&.\\
35&$A_6{\times}A_1$&$2$&30 (1)&.&.&2\\
36&$D_5{\times}A_2$&$2$&4, 10 (1)&21&.&.\\
37&$A_7$&$2$&1&.&.&.\\
38&$E_6{\times}A_1$&$2$&32, 10 (1)&19, 2 (2)&.&.\\
39&$D_7$&$2$&10, 1, 4, 12 (1)&.&.&.\\
40&$E_7$&$2$&32, 1, 4, 10, 12, 26, 30 (1)&40, 5 (5)&.&.\\
41&$E_8$&$1$&41, 1, 4, 10, 12, 26, 30, 32 (1)&41, 1, 13 (1)&41, 1 (1)&.\\
\end{tabular}\end{center}
\caption{Decomposition matrix for $E_8$.}
\end{table}

\begin{table}[htbp]
\begin{center}\begin{tabular}{rc|r|cc}
&&$\beta_{KK}$&$p = 2$&$p = 3$\\\hline
1&$1$&$1152$&.&.\\
2&$A_1'$&$48$&1&.\\
3&$A_1''$&$48$&1&.\\
4&$A_1{\times}A_1$&$4$&1&.\\
5&$A_2'$&$12$&5 (1)&1\\
6&$A_2''$&$12$&6 (1)&1\\
7&$B_2$&$8$&1&.\\
8&$(A_2{\times}A_1)'$&$2$&5 (1)&2\\
9&$(A_2{\times}A_1)''$&$2$&6 (1)&3\\
10&$B_3'$&$2$&5, 1 (1)&.\\
11&$B_3''$&$2$&6, 1 (1)&.\\
12&$F_4$&$1$&12, 1, 5, 6 (1)&12, 1 (1)\\
\end{tabular}\end{center}
\caption{Decomposition matrix for $F_4$.}
\end{table}

\begin{table}[htbp]
\begin{center}\begin{tabular}{rc|r|ccc}
&&$\beta_{KK}$&$p = 2$&$p = 3$&$p = 5$\\\hline
1&$1$&$120$&.&.&.\\
2&$A_1$&$4$&1&.&.\\
3&$A_1{\times}A_1$&$2$&1&.&.\\
4&$A_2$&$2$&4 (1)&1&.\\
5&$I_2(5)$&$2$&5 (1)&.&1\\
6&$H_3$&$1$&5, 1, 4 (1)&.&.\\
\end{tabular}\end{center}
\caption{Decomposition matrix for $H_3$.}
\end{table}

\begin{table}[htbp]
\begin{center}\begin{tabular}{rc|r|ccc}
&&$\beta_{KK}$&$p = 2$&$p = 3$&$p = 5$\\\hline
1&$1$&$14400$&.&.&.\\
2&$A_1$&$120$&1&.&.\\
3&$A_1{\times}A_1$&$8$&1&.&.\\
4&$A_2$&$12$&4 (1)&1&.\\
5&$I_2(5)$&$20$&5 (1)&.&1\\
6&$A_2{\times}A_1$&$2$&4 (1)&2&.\\
7&$I_2(5){\times}A_1$&$2$&5 (1)&.&2\\
8&$A_3$&$2$&1&.&.\\
9&$H_3$&$2$&5, 1, 4 (1)&.&.\\
10&$H_4$&$1$&10, 1, 4, 5 (1)&10, 1 (1)&10, 1 (1)\\
\end{tabular}\end{center}
\caption{Decomposition matrix for $H_4$.}
\end{table}

\subsection {Cartan matrices}

By Theorems \ref{C=DCD} and  \ref{decompGeneral}  the Cartan matrix of
$\Sigma(W,p)$ can be determined once it is known for $\Sigma_{W}$.
Types $A$ and $B$ can therefore be handled by Theorem 5.4
of \cite{G&R} and Theorem 3.3 of \cite{Ber}
which give the Cartan matrices in characteristic
zero.  Furthermore, the work of \cite{Steph-thesis} allows the
dihedral case to be solved.  However, we have not calculated the Cartan
matrix in characteristic zero in any other cases; such a calculation awaits
a more detailed study of these algebras.

\end{document}